\journal{Name of the Journal}
\newtheorem{theorem}{Theorem}[section]
\newtheorem{lemma}{Lemma}[section]
\newtheorem{proposition}{Proposition}[section]
\newtheorem{definition}{Definition}[section]
\newtheorem{remark}{Remark}[section]
\numberwithin{equation}{section}
\numberwithin{figure}{section}
\numberwithin{table}{section}
\def\XXint#1#2#3{{\setbox0=\hbox{$#1{#2#3}{\int}$}
\vcenter{\hbox{$#2#3$}}\kern-.51\wd0}}
\def\non{\nonumber }
\newcommand\defeq{\stackrel{\scriptscriptstyle \text{def}}=}
\newcommand\eps{{\epsilon}}
\newcommand{\RR}{\mathbb{R}}
\newcommand{\T}{\mathbb{T}^3}
\newcommand{\ud}{\mathrm{d}}
\begin{document}

\setlength{\pdfpageheight}{\paperheight}
\setlength{\pdfpagewidth}{\paperwidth}
\title{Analysis for Allen-Cahn-Ohta-Nakazawa Model in a Ternary System}
\author{Sookyung Joo}
\address{Department of Mathematics and Statistics, Old Dominion University, Norfolk, VA, 23529}
\author{Xiang Xu}
\address{Department of Mathematics and Statistics, Old Dominion University, Norfolk, VA, 23529}
\author{Yanxiang Zhao}%\fnref{myfootnote}}
\address{Department of Mathematics, George Washington University, Washington D.C., 20052}
%\fntext[myfootnote]{Corresponding author: yxzhao@email.gwu.edu}

\begin{abstract}
In this paper we study the global well-posedness of the Allen-Cahn Ohta-Nakazawa model with two fixed nonlinear volume constraints. Utilizing the gradient flow structure of its free energy, we prove the existence and uniqueness of the solution by following De Giorgi's minimizing movement scheme in a novel way.
\end{abstract}

\begin{keyword}
Ohta-Nakazawa model, gradient flow, minimizing movement scheme.
\end{keyword}

\date{\today}
\maketitle

%%%%%%%%%%%%%%%%%%%%%%%%%%%%%%%%%%%%%%%%%%%%%%%%%%%%%%%%%%%%%%%%%%%%%%%%%%%%%%%%%%%%%%%%%%%%%%%%%%%%%%%%%%%%%%%%%%%%%%%%%%%%%%%%%%%%%%%%%%%%%%%%%%%%%%%%%%%%%%%%%%%%%%%%%%%%%%%%%%%%%%%%%%%%%%%%%%%%%%%%%%%%%%%%%%%%%%%%%%%%%%%%%%%%%%%%%%%%%%%%%%%%%%%%%%%%%%%%%%%%%%%%%%%%%%%%%%%%%%%%%%%%%%%%%%%%%%%%%%%%%%%%%%%%%%%%%%%%%%%%%%%%%%%%%%%%%%%%%%%%%%%%%%%%%%%%%%%%%%%%%%%%%%%%%%%%%%%%%%%%%%%%%%%%%%%%%%%%%%%%%%%%%%%%%%%%%%%%%%%%%%%%%%%%%%%%%%%%%%%%%%%%%%%%%%%%%%%%%%%%%%%%%%%%%%%%%%%%%%%%%%%%%%%%%%%%%%%%%%%%%%%%%%%%%%%%%%%%%%%%%%%%%%%%%%%%%%%%%%%%%%%%%%%%%%%%%%%%%%%%%%%%%%%%%%%%%%%%%%%%%

\section{Introduction}\label{sec:Introduction}

Ohta-Nakazawa (ON) model was originally introduced in \cite{OhtaNakazawa_Macromolecules1993} and has drawn much attention in materials science, particularly for the study of phase separation of triblock copolymers. Due to their remarkable ability for self-assembly into nanoscale ordered structures \cite{Hamley_Wiley2004}, triblock copolymers have generated much interest in materials engineering. Triblock copolymers are chain molecules made by three different segment species, say $A$, $B$ and $C$ species. Due to the chemical incompatibility, the three species tend to be phase-separated; on the other hand, the two species are connected by covalent chemical bonds, which leads to the so-called microphase separation. The ON model can describe such microphase separation for triblock copolymers by the ON free energy functional:
\begin{align}\label{OK-energy-original}
E^{\text{ON}}(\phi_1,\phi_2)&=\int_{\T}\Big[\dfrac{\eps}{2}\big(|\nabla\phi_1|^2+|\nabla\phi_2|^2+\nabla\phi_1\cdot\nabla\phi_2\big)
+\dfrac{1}{2\eps}W_{\mathrm{T}}(\phi_1,\phi_2)\Big]\,\ud{x} \nonumber\\
&\qquad+\displaystyle\sum_{i,j=1}^2\dfrac{\gamma_{ij}}{2}\int_{\T}\Big[(-\Delta)^{-\frac12}\big(f(\phi_i)-\omega_i\big)
\times(-\Delta)^{-\frac12}\big(f(\phi_j)-\omega_j\big)\Big]\,\ud{x}
\end{align}
Here $\T = \prod_{i=1}^3 [-X_i, X_i] \subset\RR^3$ denotes a periodic box, and $0<\eps\ll 1$ is an interface parameter that indicates the system is in the deep segregation regime. Phase field label functions that represent the density of $A$ and $B$ species are denoted by $\phi_i=\phi_i(x)$, $i=1, 2$, respectively. Meanwhile, the concentration of $C$ species can be implicitly represented by $1-\phi_1(x)-\phi_2(x)$ since the system is considered to be incompressible. The triple-well potential $W_{\mathrm{T}}$ is of the form
\[
W_{\mathrm{T}}(\phi_1, \phi_2):= W(\phi_1)+W(\phi_2)+W(1-\phi_1-\phi_2),
\]
with $W(s)=18(s^2-s)^2$. It is noted that $W_{\mathrm{T}}$ has three minima at $(1, 0, 0)$,
$(0, 1, 0)$ and $(0, 0, 1)$, which corresponds to the phase separation between the $A, B, C$ species.
It is also worth pointing out that the first
integral in \eqref{OK-energy-original} represents short-range interaction accounting for the interfacial free energy of the
system and favors large domains with small surface area, while the second integral term  in \eqref{OK-energy-original} describes long range interaction between chain molecules. We denote $\gamma_{ij}$, $i,j = 1, 2$ the strength of such long range interactions, and the constant matrix $[\gamma_{ij}]_{2\times 2}$ is assumed to be symmetric and positive definite.

The newly introduced term
\begin{align}
f(\phi_i) = 3\phi_i^2 - 2\phi_i^3, \qquad i = 1, 2
\end{align}
is adopted to mimic $\phi_i$, as the indicator for the $A$ and $B$ species, respectively. In our earlier work \cite{XuZhao_JSC2019, XuZhao_submitted2020}, a similar term has been introduced to some binary system with long-range interaction in order to study the associated $L^2$ gradient flow dynamics and maintain a better hyperbolic tangent profile for the solution and preserve its maximum principle at both continuous and discrete level. Meanwhile, we impose as usual fixed volume constraints
\begin{equation}\label{eqn:VolumeNoTime}
\overline{f(\phi_i)} := \frac{1}{|\T|}\int_{\T}f(\phi_i(x))\,\ud{x}=\omega_i, \qquad  i=1,2.
\end{equation}
For technical reasons (see the proof of Proposition \ref{proposition-convergence}), we assume
\begin{align}\label{tech-assumption}
  \omega_i\neq 0, 1, \qquad i = 1, 2,
\end{align}
namely, no any single species occupies the entire region of $\T$. The operator  $(-\Delta)^{-\frac12}u$ is the square root of the operator $(-\Delta)^{-1} u$ with periodic boundary condition. Note that $u$ has to be zero mean to well define the operator $(-\Delta)^{-1} u$, we will take $(-\Delta)^{-1} u : = (-\Delta)^{-1} (u-\overline{u})$ when $u$ is not zero mean. In other words, removal of the zeroth Fourier mode for $u$ will make $(-\Delta)^{-1} u$ always well defined. Besides, hereafter for any function $u$, we always set $(-\Delta)^{-1} u$ and $(-\Delta)^{-\frac12}u$ to be with zero mean.

In order to study the equilibria of the ON model, we consider the $L^2$ gradient flow $\partial_t\phi_i=-\delta{E}/\delta\phi_i-\lambda_i{f}'(\phi_i)$ generated by the ON energy functional
\begin{align}
&\partial_t\phi_i= \epsilon\Delta\phi_i+\frac{\epsilon\Delta\phi_j}{2}-\frac{1}{2\epsilon}\frac{\partial{W}_{\mathrm{T}}}{\partial\phi_i}
-\sum_{k=1}^2\gamma_{ik}(-\Delta)^{-1}\big(f(\phi_k)-\omega_k\big)f'(\phi_i)-\lambda_i(t)f'(\phi_i), \label{eqn:ACON}\\
&\phi_i(x,0)=\phi_{i0}(x),\label{bc:ACON}
\end{align}
for $ (x,t)\in \T\times(0,+\infty)$, $i, j = 1, 2$ and $i\neq j$, subject to the volume constraints
\begin{equation}\label{eqn:Volume}
\overline{f(\phi_i(t))}=\dfrac{1}{|\T|}\int_{\T}f(\phi_i(t, x))\,\ud{x}=\omega_i,
\quad\forall t\in[0, +\infty),\;\omega_i\neq 0, 1.
\end{equation}
Here $\omega_i$ are given constants and $\lambda_i(t)$ are the
corresponding Lagrange multipliers to \eqref{eqn:Volume}:
\begin{equation}\label{lambda}
\lambda_i=\dfrac{\int_{\T}-\frac{\delta{E}^{\text{ON}}}{\delta\phi_i}f'(\phi_i)\,\ud{x}}{\int_{\T}|f'(\phi_i)|^2\,\ud{x}}.
\end{equation}
Hereafter, we will refer (\ref{eqn:ACON})-(\ref{bc:ACON}) to as Allen-Cahn-Ohta-Nakazawa(ACON) equations. If $(\phi_1(x,t), \phi_2(x,t))$ is a solution of the ACON dynamics (\ref{eqn:ACON})-(\ref{bc:ACON}), it is well known that it satisfies:
\begin{align}\label{eqn:EnergyDissipation}
\frac{\text{d}}{\text{d}t} E^{\text{ON}}(\phi_1,\phi_2) = - \int_{\T} \left|\frac{\delta E^{\text{ON}}}{\delta\phi_1}\right|^2 +  \left|\frac{\delta E^{\text{ON}}}{\delta\phi_2}\right|^2\ \text{d}x \le 0,
\end{align}
which implies that the ON energy is decreasing along the solution trajectory $(\phi_1(x,t), \phi_2(x,t))$. This is the so-called energy dissipation law for the general gradient flow dynamics.

The contribution in this work is that we prove the existence and uniqueness of the solution for ACON system by following the De Giorgi's
minimizing movement scheme in a novel way. Different from all existing literature in utilizing this classical implicit Euler scheme to derive the Euler Lagrange equations at the discrete level, we identify the limit curve first and use an approximation of this limit curve to establish the nonlinear terms caused by the nonlinear volume constraints in discrete Euler Lagrange equations. 

De Giorgi's minimizing movement scheme \cite{Ambrosio_Rend1995,DeGiorgi_RMA1993}, which is also referred to as the Rothe's method, is an implicit Euler scheme specialized at gradient flows in separable Hilbert spaces (later extended to general metric spaces). Given a gradient flow $\partial_tu=-\nabla{F}(u)$, where the energy $F$ is coercive and lower semicontinuous, this very scheme provides an energy-driven implicit-time discretization to solve the evolution equation within a natural framework. Considering the gradient flow \eqref{GF} with the nonlinear volume constraints \eqref{eqn:Volume}, the great advantage to apply De Giorgi's minimizing movement scheme is that it ensures the preservation of such volume constraints at each discrete step. Nevertheless, to prove the existence of solutions to \eqref{GF} there are still essential difficulties arising from \eqref{eqn:Volume}: after acquiring a discrete sequence $(\phi_{1\tau}^k, \phi_{2\tau}^k)$, usually the next step is to establish the Euler Lagrange equations for this discrete sequence; however, if we follow the standard procedures the denominators in the corresponding Lagrange multiplier
terms caused by \eqref{eqn:Volume} cannot be ensured to be nonzero. To solve this issue, alternatively we identify the limit curve $(\phi_1(t), \phi_2(t))$ of the piecewise constant interpolation functional $(\phi_{1\tau}(t), \phi_{2\tau}(t))$ as $\tau\rightarrow 0$ first,
based on the uniform bounds achieved in Lemma \ref{estimates-discrete}. The assumption \eqref{tech-assumption} together with the refined Arzela-Ascoli theorem in \cite{AmbrosioGigliSavare_Book2008} ensures that the quantities $\int_{\mathbb{T}^3}|f'(\phi_i(t))|^2dx$, $i=1, 2$, related to the limit curve, stay away from $0$, which plays the crucial role to further derive the Euler Lagrange equations of the discrete sequence as well as the uniform bound of the discrete Lagrange multipliers. To the best of our knowledge, this has been the first time that the De Giorgi's minimizing movement scheme
is used in such a manner. Meanwhile, we also want to point out that the success of such derivation might be undermined due to the non-integrability of certain terms in the discrete Lagrange multipliers. Therefore, instead of using the limit curve directly, we shall perform approximations first by virtue of the classical resolvant operator $J_\lambda=(I-\lambda\Delta)^{-1}$ for sufficiently small $\lambda>0$.

Some conventional notations adopted throughout the paper are collected here. We will denote by $\|\cdot\|_{L^p}$ and $\|\cdot\|_{H^s}$ the standard norms for the periodic Sobolev spaces $L^p_{\text{per}}(\T)$ and $H^s_{\text{per}}(\T)$. The standard $L^2$ inner product will be denoted by $\langle \cdot, \cdot \rangle$.

%%%%%%%%%%%%%%%%%%%%%%%%%%%%%%%%%%%%%%%%%%%%%%%%%%%%%%%%%%%%%%%%%%%%%%%%%%%%%%%%%%%%%%%%%%%%%%%%%%%%%%%%%%%%%%%%%%%%%%%%%%

\section{Existence and Uniqueness of the Solution of ACON system}\label{section:Wellposedness}

Without loss of generality, throughout this section, we consider $\eps=1$ in \eqref{OK-energy-original} and $|\T| = 1$. Accordingly $E^{\text{ON}}$ is replaced by an energy functional $E$:
\begin{align}\label{OK-energy}
E(\phi_1,\phi_2)&=\int_{\T}\left[\dfrac{1}{2}\Big(|\nabla\phi_1|^2+|\nabla\phi_2|^2+\nabla\phi_1\cdot\nabla\phi_2\Big)
+\dfrac{1}{2}W_{\mathrm{T}}(\phi_1,\phi_2)\right]\,\ud{x} \nonumber\\
&\qquad+\displaystyle\sum_{i,j=1}^2\dfrac{\gamma_{ij}}{2}\int_{\T}\left[(-\Delta)^{-\frac12}\big(f(\phi_i)-\omega_i\big)
\times(-\Delta)^{-\frac12}\big(f(\phi_j)-\omega_j\big)\right]\,\ud{x}.
\end{align}
The associated $L^2$ gradient flow dynamics (\ref{eqn:ACON})-(\ref{bc:ACON}) is replaced by
\begin{align}
&\partial_t\phi_i=\Delta\phi_i+\frac{\Delta\phi_j}{2}-\frac12\frac{\partial{W}_{\mathrm{T}}}{\partial\phi_i}
-\sum_{k=1}^2\gamma_{ik}(-\Delta)^{-1}\big(f(\phi_k)-\omega_k\big)f'(\phi_i)-\lambda_i(t)f'(\phi_i), \label{GF}\\
&\phi_i(x,0)=\phi_{i0}(x),\label{IC}
\end{align}
for $ (x,t)\in \T\times(0,+\infty)$, $i, j = 1, 2$ and $i\neq j$, subject to the volume constraints \eqref{eqn:Volume}. Here $\lambda_i(t)$ are the corresponding Lagrange multipliers to \eqref{eqn:Volume} by replacing $E^{\text{ON}}$ by $E$:
\begin{equation}\label{lambda}
\lambda_i=\dfrac{\int_{\T}-\frac{\delta{E}}{\delta\phi_i}f'(\phi_i)\,\ud{x}}{\int_{\T}|f'(\phi_i)|^2\,\ud{x}}.
\end{equation}

\subsection{Implicit Euler Scheme}\label{subsection:ImplicitEulerScheme}

We define functional spaces
\begin{align}
H^{1}_{\omega_i}=\left\{ u \in H^1(\T), \overline{f(u)}=\omega_i\right\}, \quad i=1, 2,
\end{align}
and start the argument from the following lemma.

\begin{lemma}\label{lemma-H1-bound}
For any $\phi_i \in H_{\omega_i}^1, i=1,2$, one has
\begin{equation}
\|\phi_i\|_{H^1(\T)}^2\leq 4E(\phi_1,\phi_2)+2.
\end{equation}
\end{lemma}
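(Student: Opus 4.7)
The plan is to bound $\|\phi_i\|_{H^1}^2=\|\phi_i\|_{L^2}^2+\|\nabla\phi_i\|_{L^2}^2$ by showing that each of the three non-negative ingredients of $E$ — the cross-gradient contribution, the triple-well potential $W_{\mathrm{T}}$, and the long-range interaction — can be dropped in a controlled way, leaving just enough to absorb both the gradient and the $L^2$ parts of the $H^1$ norm.

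First I would dispense with the nonlocal piece by noting that
$\sum_{i,j=1}^{2}\frac{\gamma_{ij}}{2}\int_{\T}(-\Delta)^{-1/2}(f(\phi_i)-\omega_i)\,(-\Delta)^{-1/2}(f(\phi_j)-\omega_j)\,\ud x$
is a quadratic form in $u_k:=(-\Delta)^{-1/2}(f(\phi_k)-\omega_k)$ whose coefficient matrix $[\gamma_{ij}]$ is symmetric positive definite, hence nonnegative. Since $W\geq 0$, also $W_{\mathrm{T}}(\phi_1,\phi_2)\geq W(\phi_i)$ pointwise. For the gradient term, Young's inequality gives $\nabla\phi_1\cdot\nabla\phi_2\geq -\tfrac12(|\nabla\phi_1|^2+|\nabla\phi_2|^2)$, so that
$$\tfrac12\bigl(|\nabla\phi_1|^2+|\nabla\phi_2|^2+\nabla\phi_1\cdot\nabla\phi_2\bigr)\;\geq\;\tfrac14|\nabla\phi_i|^2$$
for $i=1,2$. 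Combining these three observations yields
$$\|\nabla\phi_i\|_{L^2}^2+2\int_{\T}W(\phi_i)\,\ud x\;\leq\;4\,E(\phi_1,\phi_2).$$

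Next I would supply the $L^2$ bound and the additive constant $2$ via the pointwise elementary lemma $s^2\leq 2+2W(s)$ for every $s\in\R$. A quick case split on $|s-1|$ suffices: if $|s-1|\geq 1/\sqrt{18}$ then $W(s)=18s^2(s-1)^2\geq s^2$, hence $s^2\leq W(s)\leq 2+2W(s)$; if $|s-1|<1/\sqrt{18}$ then $|s|<1+1/\sqrt{18}$, hence $s^2<(1+1/\sqrt{18})^2<2\leq 2+2W(s)$. Integrating this pointwise bound over $\T$ (where $|\T|=1$) gives
$$\|\phi_i\|_{L^2}^2\;\leq\;2+2\int_{\T}W(\phi_i)\,\ud x.$$

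Adding this to the previous display and using $\|\phi_i\|_{H^1}^2=\|\phi_i\|_{L^2}^2+\|\nabla\phi_i\|_{L^2}^2$ delivers $\|\phi_i\|_{H^1}^2\leq 4E(\phi_1,\phi_2)+2$. I should note that the volume constraint $\phi_i\in H^1_{\omega_i}$ is not actually invoked; the membership in $H^1_{\omega_i}$ is just the admissible set on which $E$ is evaluated. The only step that is not entirely routine is the pointwise algebraic bound $s^2\leq 2+2W(s)$, but the brief case analysis above makes it transparent, so no real obstacle remains.
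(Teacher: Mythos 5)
Your proof is correct and follows essentially the same strategy as the paper's: discard the nonnegative nonlocal term, extract a multiple of $|\nabla\phi_i|^2$ and of $W(\phi_i)$ from the local part, and close with a pointwise bound of the form $s^2\le 2+2W(s)$. The only cosmetic differences are that the paper handles the gradient cross term via the exact identity $|\nabla\phi_1|^2+|\nabla\phi_2|^2+\nabla\phi_1\cdot\nabla\phi_2=\tfrac12\big(|\nabla\phi_1|^2+|\nabla\phi_2|^2+|\nabla(1-\phi_1-\phi_2)|^2\big)$ rather than by Young's inequality, and derives its pointwise bound as $s^2\le (s^2-s)^2+2\le W(s)+2$ via Young's inequality rather than your case split; both routes land on the same constant $4E(\phi_1,\phi_2)+2$.
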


\begin{proof}
Using Young's inequality, we get
\[
\phi_i^2\leq\dfrac{\phi_i^4}{4}+1=\dfrac{\phi_i^2(\phi_i-1+1)^2}{4}+1\leq
\dfrac{\phi_i^2[2(\phi_i-1)^2+2]}{4}+1=\dfrac{\phi_i^2(\phi_i-1)^2}{2}+\frac{\phi_i^2}{2}+1.
\]
Hence
\[
\int_{\T}|\phi_i(x)|^2\,\ud{x}\leq\int_{\T}(\phi_i^2-\phi_i)^2\ud{x}+2|\T|=\int_{\T}(\phi_i^2-\phi_i)^2\ud{x}+2 \le \int_{\T} W(\phi_1)\ud{x} + 2.
\]
Note that the energy $E(\phi_1, \phi_2)$ can be rewritten as
\begin{align*}
E(\phi_1,\phi_2)&=\dfrac{1}{2}\int_{\T}\left[\dfrac{1}{2}|\nabla\phi_1|^2+ W(\phi_1)\right]+
                                                            \left[\dfrac{1}{2}|\nabla\phi_2|^2+ W(\phi_2)\right]+
                                                            \left[\dfrac{1}{2}|\nabla(1-\phi_1-\phi_2)|^2+ W(1-\phi_1-\phi_2)\right] \ud{x} \\
&\qquad+\displaystyle\sum_{i,j=1}^2\dfrac{\gamma_{ij}}{2}\int_{\T}\left[(-\Delta)^{-\frac12}\big(f(\phi_i)-\omega_i\big)
\times(-\Delta)^{-\frac12}\big(f(\phi_j)-\omega_j\big)\right]\,\ud{x}
\end{align*}
and the positive definiteness of $[\gamma_{ij}]$ implies the nonnegativity of the second part of $E(\phi_1,\phi_2)$, the proof is finished by using \eqref{OK-energy}.
\end{proof}

Next, for any fixed time step $\tau>0$ and $(\phi_1^\ast, \phi_2^\ast)\in L^2(\T)\times L^2(\T)$,
we consider the functional
\begin{equation}\label{discrete-functional}
F_\tau(\phi_1,\phi_2; \phi_1^*,\phi_2^*) = E(\phi_1,\phi_2) + \dfrac{\|\phi_1-\phi_1^\ast\|_{L^2(\T)}^2+\|\phi_2-\phi_2^\ast\|_{L^2(\T)}^2}{2\tau},\quad
\phi_i\in H^1_{\omega_i}(\T),\; i=1, 2.
\end{equation}
and prove the existence of its minimizers. To this end, we need to derive the following inequalities first.

\begin{lemma}\label{lemma-key-inequality}
Let $w\in L^\frac{6}{5}(\T)$ and $\Psi=(-\Delta)^{-1}w=G\ast w$, where $G$ is the Green's function for the Laplacian operator coupled with periodic boundary condition.
Then there exists a generic constant $C>0$, such that
\begin{align}
\|\Psi\|_{L^6(\T)}&  \leq C\|w\|_{L^{\frac65}(\T)},\label{HLS-1}  \\
\|\nabla\Psi\|_{L^2(\T)}&\leq C\|w\|_{L^{\frac65}(\T)}.\label{HLS-2}
\end{align}
\end{lemma}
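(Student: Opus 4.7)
The plan is to bypass the explicit Green's function formulation and instead work with $\Psi$ as the (zero mean) weak solution of the Poisson problem $-\Delta \Psi = w - \overline{w}$, which is how the operator $(-\Delta)^{-1}$ is defined in the paper. In three dimensions, the Sobolev embedding $H^1(\T) \hookrightarrow L^6(\T)$ together with a Poincar\'e inequality for zero-mean functions gives $\|\Psi\|_{L^6(\T)} \le C \|\nabla \Psi\|_{L^2(\T)}$, so both \eqref{HLS-1} and \eqref{HLS-2} reduce to a single energy estimate.

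The key step is to test the equation $-\Delta \Psi = w - \overline{w}$ with $\Psi$ itself. Integrating by parts on $\T$ and using $\overline{\Psi} = 0$ to eliminate the mean-value term gives
\begin{equation*}
\int_{\T} |\nabla \Psi|^2 \, \ud x \;=\; \int_{\T} (w - \overline{w})\, \Psi \, \ud x \;=\; \int_{\T} w \, \Psi \, \ud x.
\end{equation*}
Applying H\"older's inequality with the conjugate pair $(6/5, 6)$ on the right, and then the Sobolev embedding above, yields
\begin{equation*}
\|\nabla \Psi\|_{L^2(\T)}^2 \;\le\; \|w\|_{L^{6/5}(\T)} \|\Psi\|_{L^6(\T)} \;\le\; C \|w\|_{L^{6/5}(\T)} \|\nabla \Psi\|_{L^2(\T)},
\end{equation*}
which, after dividing by $\|\nabla \Psi\|_{L^2(\T)}$, delivers \eqref{HLS-2}. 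Plugging this back into the Sobolev embedding immediately produces \eqref{HLS-1}.

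I do not expect any real obstacle: the main technical points are (i) justifying that $\Psi \in H^1(\T)$ so that the test-function step above is legitimate --- this follows from the Lax--Milgram theorem since $w - \overline{w} \in L^{6/5}(\T) \hookrightarrow H^{-1}(\T)$ by the dual embedding $L^{6/5} \hookrightarrow (H^1)^*$; and (ii) ensuring that the Poincar\'e--Sobolev constant on the periodic box is finite, which is standard once one restricts to the closed subspace of zero-mean functions. The argument bypasses the convolution-kernel representation entirely, and in particular avoids invoking the Hardy--Littlewood--Sobolev inequality for the torus Green's function (which would work but requires separating the singular part of $G$ from its smooth periodic correction).
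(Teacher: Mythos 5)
Your proof is correct and follows essentially the same route as the paper: both test $-\Delta\Psi = w$ (with the zero-mean normalization) against $\Psi$, apply H\"older with the pair $(6/5,6)$, and close the estimate via the Sobolev embedding $H^1(\T)\hookrightarrow L^6(\T)$ and Poincar\'e's inequality. Your added remarks on Lax--Milgram and the dual embedding $L^{6/5}\hookrightarrow (H^1)^*$ merely make explicit what the paper leaves implicit.
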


\begin{proof}
Since $\Psi$ satisfies
\[
  \begin{cases}
   -\Delta\Psi=w,\\
   \int_{\T}\Psi(x)\,dx=0,
  \end{cases}
\]
we multiply both sides of the first equation above by $\Psi$, and then integrate over $\T$. It yields
\[
  \|\nabla\Psi\|_{L^2(\T)}^2=\int_{\T}w(x)\Psi(x)\,dx\leq\|\Psi\|_{L^6(\T)}\|w\|_{L^{\frac65}(\T)}
  \leq C\|\Psi\|_{H^1(\T)}\|w\|_{L^\frac{6}{5}(\T)} \le C\|\nabla\Psi\|_{L^2(\T)}\|w\|_{L^\frac{6}{5}(\T)}
\]
in which the last inequality is due to Poincare's inequality and hence \eqref{HLS-2} is proved. Furthermore, $\eqref{HLS-1}$ comes directly from the Sobolev inequality $\|\Psi\|_{L^6(\T)}\leq C\|\Psi\|_{H^1(\T)}$, Poincare's inequality and \eqref{HLS-2}.
\end{proof}

By Lemmas \ref{lemma-H1-bound} and \ref{lemma-key-inequality}, it is immediate to check that
\begin{lemma}\label{lemma-discrete-minimizer}
The functional $F_\tau$ has a minimizer in $H_{\omega_1}^1\times H_{\omega_2}^1$.
\end{lemma}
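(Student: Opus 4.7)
\bigskip

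The plan is to apply the direct method in the calculus of variations. First I would verify that $F_\tau$ is bounded below on $H^1_{\omega_1}\times H^1_{\omega_2}$: the Dirichlet-type quadratic form $\tfrac12(|\nabla\phi_1|^2+|\nabla\phi_2|^2+\nabla\phi_1\cdot\nabla\phi_2)$ is pointwise nonnegative, the triple-well potential $W_{\mathrm T}$ is nonnegative, the nonlocal term is nonnegative by the positive definiteness of $[\gamma_{ij}]$, and the penalization is obviously nonnegative. Hence $F_\tau\ge 0$, and we may pick a minimizing sequence $(\phi_1^n,\phi_2^n)\subset H^1_{\omega_1}\times H^1_{\omega_2}$ with $F_\tau(\phi_1^n,\phi_2^n;\phi_1^\ast,\phi_2^\ast)\to\inf F_\tau <+\infty$.

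Next I would extract a convergent subsequence. Since $F_\tau(\phi_1^n,\phi_2^n;\phi_1^\ast,\phi_2^\ast)$ is bounded, in particular $E(\phi_1^n,\phi_2^n)$ is bounded, so Lemma \ref{lemma-H1-bound} yields a uniform $H^1$ bound on each $\phi_i^n$. Up to extraction, $\phi_i^n\rightharpoonup\phi_i$ weakly in $H^1(\T)$, strongly in $L^p(\T)$ for every $p<6$ by Rellich--Kondrachov compactness, and a.e.\ in $\T$. In particular, since $f(s)=3s^2-2s^3$ is a polynomial of degree three, $f(\phi_i^n)\to f(\phi_i)$ in $L^1(\T)$, so the volume constraint is preserved in the limit, $\overline{f(\phi_i)}=\omega_i$, giving $\phi_i\in H^1_{\omega_i}$.

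Then I would establish lower semicontinuity term by term. The quadratic gradient terms $\|\nabla\phi_i\|_{L^2}^2$ are weakly lower semicontinuous, while the mixed term $\int_{\T}\nabla\phi_1^n\cdot\nabla\phi_2^n\,\ud x$ is a weak--weak bilinear form on $H^1\times H^1$; one handles it by writing $2\nabla\phi_1\cdot\nabla\phi_2=|\nabla(\phi_1+\phi_2)|^2-|\nabla\phi_1|^2-|\nabla\phi_2|^2$ and invoking lower semicontinuity on the full combination (equivalently, the quadratic form on $\nabla(\phi_1,\phi_2)$ is nonnegative and weakly l.s.c.). The triple-well term passes to the limit by strong $L^4$ convergence together with dominated convergence. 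For the $L^2$ penalization, weak $L^2$ lower semicontinuity of the norm suffices. The genuinely nonlocal term is the main technical point: I would use Lemma \ref{lemma-key-inequality} to control $(-\Delta)^{-1}(f(\phi_i^n)-\omega_i)$ in $L^6(\T)$ by $\|f(\phi_i^n)-\omega_i\|_{L^{6/5}}$, which is uniformly bounded because $\phi_i^n$ is bounded in $L^6$ and hence $f(\phi_i^n)$ is bounded in $L^2\hookrightarrow L^{6/5}$. Strong $L^p$ convergence $\phi_i^n\to\phi_i$ for $p<6$ gives $f(\phi_i^n)\to f(\phi_i)$ strongly in $L^{6/5}$, so by Lemma \ref{lemma-key-inequality} the potentials $(-\Delta)^{-1}(f(\phi_i^n)-\omega_i)$ converge strongly in $L^6$ (in fact in $H^1$), making the nonlocal double integral continuous along the sequence.

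The main obstacle I anticipate is precisely the nonlocal term, because it depends nonlinearly on $\phi_i$ through $f(\phi_i)$ and couples the two indices $i,j$; without the gain of integrability supplied by Lemma \ref{lemma-key-inequality} combined with Rellich compactness, one cannot pass to the limit. Once continuity of the nonlocal term and lower semicontinuity of all remaining terms are in hand, combining them gives
\[
F_\tau(\phi_1,\phi_2;\phi_1^\ast,\phi_2^\ast)\le\liminf_{n\to\infty}F_\tau(\phi_1^n,\phi_2^n;\phi_1^\ast,\phi_2^\ast)=\inf F_\tau,
\]
so $(\phi_1,\phi_2)\in H^1_{\omega_1}\times H^1_{\omega_2}$ is the desired minimizer.
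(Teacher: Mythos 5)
Your proposal is correct and follows essentially the same route as the paper: a minimizing sequence bounded in $H^1$ via Lemma \ref{lemma-H1-bound}, Rellich compactness, the decomposition $2\nabla\phi_1\cdot\nabla\phi_2=|\nabla(\phi_1+\phi_2)|^2-|\nabla\phi_1|^2-|\nabla\phi_2|^2$ for weak lower semicontinuity of the mixed gradient term, and Lemma \ref{lemma-key-inequality} to upgrade strong $L^{6/5}$ convergence of $f(\phi_i^n)$ to convergence of the nonlocal term. No gaps to report.
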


\begin{proof}
First, $F_\tau$ is nonnegative thus there exists a minimizing
sequence $\{(\phi_{1n},\phi_{2n})\}$ in $H^1_{\omega_1}\times H^1_{\omega_2}$ satisfying
\[
0\leq \displaystyle\inf_{\psi_i\in H^1_{\omega_i}}F_\tau(\psi_1,\psi_2)\leq F_\tau(\phi_{1n},\phi_{2n})\leq \displaystyle
  \inf_{\psi_i\in H^1_{\omega_i}}F_\tau(\psi_1,\psi_2)+\dfrac{1}{n}.
\]
Hence $E(\phi_{1n},\phi_{2n})$ is bounded. By Lemma \ref{lemma-H1-bound}, $\{(\phi_{1n},\phi_{2n})\}$ is bounded in $H^1(\T)\times H^1(\T)$, and (up to a subsequence) we get $\phi_{in_k} \rightharpoonup \phi_i$ weakly in $H^1(\T)$, $\phi_{in_k}\rightarrow \phi_i$ strongly in $L^p(\T)$, for $p\in[1,6)$ and $i=1, 2$. Hence $\phi_i\in H^1_{\omega_i}$, and we can further derive
\begin{align*}
&\quad \int_{\T}\big(|\nabla\phi_1|^2+|\nabla\phi_2|^2+\nabla\phi_1\cdot\nabla\phi_2\big)\,\ud{x} \\
&= \int_{\T} \frac12 |\nabla\phi_1+\nabla\phi_2|^2\,\ud{x} +
  \int_{\T} \frac12 |\nabla\phi_1|^2\,\ud{x}+
   \int_{\T} \frac12 |\nabla\phi_2|^2\,\ud{x}\\
&\leq\displaystyle\liminf_{k\rightarrow\infty}\int_{\T} \frac12 |\nabla\phi_{1n_k}+\nabla\phi_{2n_k}|^2\,\ud{x}
+\displaystyle\liminf_{k\rightarrow\infty}\int_{\T} \frac12 |\nabla\phi_{1n_k}|^2\,\ud{x}
+\displaystyle\liminf_{k\rightarrow\infty}\int_{\T} \frac12 |\nabla\phi_{2n_k}|^2\,\ud{x}\\
&=\displaystyle\liminf_{k\rightarrow\infty}\int_{\T}\big(|\nabla\phi_{1n_k}|^2+|\nabla\phi_{2n_k}|^2
+\nabla\phi_{1n_k}\cdot\nabla\phi_{2n_k}\big)\,\ud{x},
\end{align*}
and
\[
\displaystyle\lim_{k\rightarrow\infty}W_{\mathrm{T}}(\phi_{1n_k},\phi_{2n_k})=W_{\mathrm{T}}(\phi_1,\phi_2).
\]
Besides, denoting
$\Psi_{in_k}=G\ast\big(f(\phi_{in_k})-\omega_i\big)$,
$\Psi_i=G\ast\big(f(\phi_i)-\omega_i\big)$, by \eqref{HLS-2}, it yields
$$
 0\leq\displaystyle\lim_{k\rightarrow\infty}\|\nabla\Psi_{in_k}-\nabla\Psi_i\|_{L^2(\T)}\leq
 C\displaystyle\lim_{k\rightarrow\infty}\|f(\phi_{in_k})-f(\phi_i)\|_{L^{\frac{6}{5}}(\T)}=0,
$$
which implies $\nabla\Psi_{in_k}\rightarrow\nabla\Psi_i$ in $L^2(\T)$.
As a consequence, we have
\begin{align*}
&\displaystyle\lim_{k\rightarrow\infty}\int_{\T}(-\Delta)^{-\frac12}\big(f(\phi_{in_k})-\omega_i\big)
\times(-\Delta)^{-\frac12}\big(f(\phi_{jn_k})-\omega_j\big)\,\ud{x} \\
=&\displaystyle\lim_{k\rightarrow\infty}\int_{\T}\nabla\Psi_{in_k}\cdot\nabla\Psi_{jn_k}\,\ud{x}
=\int_{\T}\nabla\Psi_{i}\cdot\nabla\Psi_{j}\,\ud{x}\\
=&\int_{\T}(-\Delta)^{-\frac12}\big(f(\phi_i)-\omega_i\big)\times(-\Delta)^{-\frac12}\big(f(\phi_j)-\omega_j\big)\,\ud{x}.
\end{align*}
To sum up, we conclude that
$$
  E(\phi_1,\phi_2)\leq \displaystyle\liminf_{k\rightarrow\infty}E(\phi_{1n_k},\phi_{2n_k}),
$$
and henceforth
$$
  F_\tau(\phi_1,\phi_2)\leq\displaystyle\liminf_{k\rightarrow\infty}F_\tau(\phi_{1n_k},\phi_{2n_k})
  = \displaystyle\inf_{\psi_i\in H^1_{\omega_i} }F_\tau(\psi_1,\psi_2),
$$
which finishes the proof.
\end{proof}

As a consequence, for any initial data $(\phi_{10}, \phi_{20})\in H^1_{\omega_1}\times H^1_{\omega_2}$, using Lemma \ref{lemma-discrete-minimizer} one may define a discrete sequence $\{(\phi_{1\tau}^k, \phi_{2\tau}^k)\}$ recursively by
\begin{equation}\label{def-discrete-sequence}
\begin{cases}
(\phi_{1\tau}^0,\phi_{2\tau}^0)=(\phi_{10},\phi_{20}),\\
(\phi_{1\tau}^{k+1},\phi_{2\tau}^{k+1}):=\underset{\phi_i\in H_{\omega_i}^1}{\mathrm{arg min}} \ F_{\tau}(\phi_1, \phi_2;\phi_{1\tau}^k, \phi_{2\tau}^k), \;\forall k\geq 0.
\end{cases}
\end{equation}
Correspondingly, we consider a piecewise constant interpolation
$t\in [0, +\infty)\mapsto (\phi_{1\tau}(t),\phi_{2\tau}(t))$ by
\begin{equation}\label{def-piecewise-interpolation}
\big(\phi_{1\tau}(t),\phi_{2\tau}(t)\big)=(\phi_{1\tau}^k,\phi_{2\tau}^k) \quad\mbox{for }\; k\tau\leq t<(k+1)\tau.
\end{equation}
Then we can collect the following estimates for the piecewise constant functional $(\phi_{1\tau}(t),\phi_{2\tau}(t))$.

\begin{lemma}\label{estimates-discrete}
For any $T>0$, $\tau\in (0, 1)$ and $0\leq s<t\leq T$, the piecewise constant interpolation functional $(\phi_{1\tau}(t),\phi_{2\tau}(t))$ satisfies that for $i=1,2$,
\begin{align}
\displaystyle\sup_{t\in [0, T]}\|\phi_{i\tau}(t)\|_{H^1(\T)}&\leq \sqrt{4E(\phi_{10},\phi_{20})+2},\label{uniform-H1-bound}\\
E\big(\phi_{1\tau}(t),\phi_{2\tau}(t)\big)&\leq E\big(\phi_{1\tau}(s),\phi_{2\tau}(s)\big)\leq E(\phi_{10},\phi_{20}), \label{energy-decreasing}\\
\|\phi_{i\tau}(t)-\phi_{i\tau}(s)\|_{L^p(\T)}&\leq C(\phi_{10},\phi_{20},p)(t-s+\tau)^{\frac{6-p}{4p}},\qquad\forall\, p\in[2,6).
\label{time-difference-estimate}
\end{align}
\end{lemma}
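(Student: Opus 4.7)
The plan is to extract everything from the defining minimization property \eqref{def-discrete-sequence}. For each $k\ge 0$, I will test the minimizer $(\phi_{1\tau}^{k+1},\phi_{2\tau}^{k+1})$ against the admissible competitor $(\phi_{1\tau}^k,\phi_{2\tau}^k)\in H^1_{\omega_1}\times H^1_{\omega_2}$ in $F_\tau$. This immediately yields the one‐step discrete energy inequality
\[
E(\phi_{1\tau}^{k+1},\phi_{2\tau}^{k+1})
+\frac{\|\phi_{1\tau}^{k+1}-\phi_{1\tau}^{k}\|_{L^2}^{2}+\|\phi_{2\tau}^{k+1}-\phi_{2\tau}^{k}\|_{L^2}^{2}}{2\tau}
\le E(\phi_{1\tau}^{k},\phi_{2\tau}^{k}).
\]
Iterating this inequality in $k$, combined with the piecewise constant definition \eqref{def-piecewise-interpolation}, gives the monotone chain \eqref{energy-decreasing}, and then Lemma \ref{lemma-H1-bound} applied to $(\phi_{1\tau}(t),\phi_{2\tau}(t))$ immediately gives the uniform $H^1$ bound \eqref{uniform-H1-bound}.

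For the time modulus of continuity \eqref{time-difference-estimate}, the first move is to telescope the one-step inequality to get
\[
\sum_{j=0}^{\infty}\frac{\|\phi_{i\tau}^{j+1}-\phi_{i\tau}^{j}\|_{L^2}^{2}}{2\tau}\le E(\phi_{10},\phi_{20}).
\]
For $0\le s<t\le T$, pick integers $k\le m$ with $k\tau\le s<(k+1)\tau$ and $m\tau\le t<(m+1)\tau$, so that $(m-k)\tau\le t-s+\tau$. Writing $\phi_{i\tau}(t)-\phi_{i\tau}(s)=\sum_{j=k}^{m-1}(\phi_{i\tau}^{j+1}-\phi_{i\tau}^{j})$ and applying Cauchy-Schwarz in the sum, I obtain
\[
\|\phi_{i\tau}(t)-\phi_{i\tau}(s)\|_{L^2}^{2}
\le (m-k)\sum_{j=k}^{m-1}\|\phi_{i\tau}^{j+1}-\phi_{i\tau}^{j}\|_{L^2}^{2}
\le 2E(\phi_{10},\phi_{20})\,(t-s+\tau),
\]
which is precisely \eqref{time-difference-estimate} at the endpoint $p=2$ (where $(6-p)/(4p)=1/2$).

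To upgrade to general $p\in[2,6)$, I will interpolate between $L^2$ and $L^6$. The $L^6$ bound comes for free: the uniform $H^1$ estimate \eqref{uniform-H1-bound} and the 3D Sobolev embedding $H^1(\T)\hookrightarrow L^6(\T)$ yield $\|\phi_{i\tau}(t)-\phi_{i\tau}(s)\|_{L^6}\le C(\phi_{10},\phi_{20})$ uniformly. With the interpolation exponent $\theta=(6-p)/(2p)$ determined by $1/p=\theta/2+(1-\theta)/6$, one has
\[
\|\phi_{i\tau}(t)-\phi_{i\tau}(s)\|_{L^p}
\le \|\phi_{i\tau}(t)-\phi_{i\tau}(s)\|_{L^2}^{\theta}\,
    \|\phi_{i\tau}(t)-\phi_{i\tau}(s)\|_{L^6}^{1-\theta}
\le C(\phi_{10},\phi_{20},p)\,(t-s+\tau)^{\theta/2},
\]
and $\theta/2=(6-p)/(4p)$ is exactly the exponent claimed in \eqref{time-difference-estimate}.

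None of the steps presents a genuine obstacle; the only subtle point is checking that the sum-of-squared-increments inequality, which comes only from a single telescoping, is strong enough — it is, thanks to the Cauchy-Schwarz gain of an extra factor $(m-k)\le(t-s+\tau)/\tau$ that cancels the lone $\tau$ produced by the dissipation bound, leaving the clean $(t-s+\tau)^{1/2}$ rate in $L^2$ independent of $\tau$. Interpolation then delivers the full range of exponents, which is exactly the form needed to invoke the refined Arzel\`a-Ascoli criterion when $\tau\to 0$ in the sequel.
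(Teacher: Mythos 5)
Your proposal is correct and follows essentially the same route as the paper: the one-step minimization inequality, telescoping for the energy monotonicity and the summed squared increments, Cauchy--Schwarz on the telescoped difference to get the $L^2$ modulus $\sqrt{t-s+\tau}$, and $L^2$--$L^6$ interpolation combined with the uniform $H^1$ bound for general $p\in[2,6)$. The exponents $\theta=(6-p)/(2p)$ and $\theta/2=(6-p)/(4p)$ match the paper's computation exactly.
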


\begin{proof}

First, since $(\phi_{1\tau}^{k+1},\phi_{2\tau}^{k+1})$ is a minimizer of $F_\tau$ with
$(\phi_1^\ast, \phi_2^\ast)=(\phi_{1\tau}^k, \phi_{2\tau}^k)$, we know that
\begin{align}\label{discrete-inequality}
E(\phi_{1\tau}^{k+1},\phi_{2\tau}^{k+1}) +
\dfrac{1}{2\tau}\Big(\big\|\phi_{1\tau}^{k+1}-\phi_{1\tau}^{k}\big\|_{L^2(\T)}^2+\big\|\phi_{2\tau}^{k+1}-\phi_{2\tau}^{k}\big\|_{L^2(\T)}^2\Big)
\leq
E(\phi_{1\tau}^{k}, \phi_{2\tau}^{k}),\quad\forall k\geq 0,
\end{align}
which implies that for $\forall t\in [0, T]$ it holds
\begin{align*}
E\big(\phi_{1\tau}(t), \phi_{2\tau}(t)\big)\leq E(\phi_{1\tau}^0, \phi_{2\tau}^0)=E(\phi_{10},\phi_{20}).
\end{align*}
As a consequence, it follows from Lemma \ref{lemma-H1-bound} that
\begin{align*}
\big\|\phi_{i\tau}(t)\big\|_{H^1(\T)}^2\leq 4E\big(\phi_{1\tau}(t), \phi_{2\tau}(t)\big)+2\leq
4E(\phi_{10},\phi_{20})+2.
\end{align*}
Moreover, $\forall\, 0\leq s<t\leq T$, let us denote $m=\lfloor s/\tau\rfloor$, $n=\lfloor t/\tau\rfloor$. Repeated use of \eqref{discrete-inequality} directly yields
\begin{align*}
E\big(\phi_{1\tau}(t),\phi_{2\tau}(t)\big)=E(\phi_{1\tau}^n,\phi_{2\tau}^n)\leq
E(\phi_{1\tau}^m,\phi_{2\tau}^m)=E\big(\phi_{1\tau}(s),\phi_{2\tau}(s)\big).
\end{align*}
Meanwhile using H\"{o}lder's inequality and summing \eqref{discrete-inequality} over $k=m,\cdots, n-1$ we obtain
\begin{align*}
\big\|\phi_{i\tau}^n-\phi_{i\tau}^m\big\|_{L^2(\T)}&\leq\displaystyle\sum_{k=m}^{n-1}\big\|\phi_{i\tau}^{k+1}-\phi_{i\tau}^{k}\big\|_{L^2(\T)}
\leq\sqrt{n-m}\sqrt{\displaystyle\sum_{k=m}^{n-1}\big\|\phi_{i\tau}^{k+1}-\phi_{i\tau}^{k}\big\|_{L^2(\T)}^2}\\
&\leq\sqrt{2\tau{n}-2\tau{m}}\sqrt{E(\phi_{10},\phi_{20})}, \quad i=1,2
\end{align*}
which further indicates
\begin{equation}\label{bound-L2-difference}
\big\|\phi_{i\tau}(t)-\phi_{i\tau}(s)\big\|_{L^2(\T)}\leq
\sqrt{2E(\phi_{10},\phi_{20})}\sqrt{t-s+\tau}.
\end{equation}
Therefore, using Sobolev interpolation, \eqref{uniform-H1-bound} and \eqref{bound-L2-difference}, we have that for $\forall\, p\in[2,6)$
\begin{align*}
\|\phi_{i\tau}(t)-\phi_{i\tau}(s)\|_{L^p(\T)}
&\leq C\|\phi_{i\tau}(t)-\phi_{i\tau}(s)\|_{L^2(\T)}^{\frac{6-p}{2p}}\|\phi_{i\tau}(t)-\phi_{i\tau}(s)\|_{L^6(\T)}^{\frac{3p-6}{2p}}\non\\
&\leq C\|\phi_{i\tau}(t)-\phi_{i\tau}(s)\|_{L^2(\T)}^{\frac{6-p}{2p}}\|\phi_{i\tau}(t)-\phi_{i\tau}(s)\|_{H^1(\T)}^{\frac{3p-6}{2p}}\non\\
&\leq C(t-s+\tau)^{\frac{6-p}{4p}} (4E(\phi_{10},\phi_{20})+2)^{\frac{3p-6}{2p}}
\end{align*}
which leads to \eqref{time-difference-estimate}.
\end{proof}

It immediately follows from Lemma \ref{estimates-discrete} that
\begin{proposition}\label{proposition-convergence}
There exists a sequence $\{\tau_n\}\searrow 0^+$, such that
\begin{align}\label{Lp-strong-convergence}
\begin{cases}
\phi_{1\tau_n}(t)\rightarrow\phi_1(t)\\
\phi_{2\tau_n}(t)\rightarrow\phi_2(t)
\end{cases}
\;\;\mbox{strongly in } L^p(\T),
\;\;\forall\,t\in [0, T],\ \forall p\in[2,6),
\end{align}
where $\phi_1, \phi_2\in C([0, T]; L^p(\T))\cap L^\infty(0, T; H^1(\T))$. Besides, for $\forall\,t\in [0, T]$ and $i=1,2$
\begin{align}
&\overline{f(\phi_i(t))}=\frac{1}{|\T|}\int_{\T}f(\phi_i(t,x))\,\ud{x}=\omega_i, \label{trace-zero}\\
&\int_{\T}\big|f'(\phi_i(t,x))\big|^2\,\ud{x}=36\int_{\T} (\phi_i^2(t,x)-\phi_i(t,x))^2\,\ud{x}\geq\beta, \label{positivity-1}
\end{align}
where $\beta>0$ is a generic constant.

\end{proposition}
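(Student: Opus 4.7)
The plan is to carry out a compactness argument in three stages and then verify the constraint and positivity at the level of the limit.

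First, I would extract a convergent subsequence via a refined Arzel\`a--Ascoli type theorem in the spirit of Ambrosio--Gigli--Savar\'e. The uniform bound \eqref{uniform-H1-bound} confines every $\phi_{i\tau}(t)$ in a closed ball of $H^1(\T)$, which is relatively compact in $L^p(\T)$ for each $p\in[1,6)$ by the Rellich--Kondrachov theorem. The H\"older-in-time estimate \eqref{time-difference-estimate} supplies a uniform equicontinuity modulus in the $L^p$ topology. Applying the compactness theorem and performing successive extractions for $i=1,2$ produces a single sequence $\tau_n\searrow 0^+$ and limit functions $\phi_i\in C([0,T];L^p(\T))$ with $\phi_{i\tau_n}(t)\rightarrow\phi_i(t)$ strongly in $L^p(\T)$ for each $t\in[0,T]$ and each $p\in[2,6)$ (one sequence suffices for all $p$ by interpolation between the $L^2$-strong and the uniform $L^6$ control from Sobolev embedding). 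For fixed $t$, the $H^1$ bound on $\phi_{i\tau_n}(t)$ together with uniqueness of the $L^2$-strong limit gives weak convergence $\phi_{i\tau_n}(t)\rightharpoonup\phi_i(t)$ in $H^1(\T)$, and weak lower semicontinuity of the norm yields $\phi_i\in L^\infty(0,T;H^1(\T))$.

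Second, I would transfer the volume constraint to the limit. By \eqref{def-discrete-sequence} and \eqref{def-piecewise-interpolation} one has $\overline{f(\phi_{i\tau_n}(t))}=\omega_i$ for every $n$ and $t$. Since $f(s)=3s^2-2s^3$ is a polynomial of degree three, strong convergence in $L^4(\T)$ implies $f(\phi_{i\tau_n}(t))\rightarrow f(\phi_i(t))$ in $L^1(\T)$, and integration passes the constraint to \eqref{trace-zero}.

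Third, and most delicate, I would prove the uniform positivity \eqref{positivity-1}. Set $g_i(t):=\int_{\T}|f'(\phi_i(t,x))|^2\,\ud{x}=36\int_{\T}\phi_i^2(t)(1-\phi_i(t))^2\,\ud{x}$. Since $\phi_i\in C([0,T];L^4(\T))$, $g_i$ is continuous on the compact interval $[0,T]$ and attains its minimum at some $t^\ast\in[0,T]$. Arguing by contradiction, suppose $g_i(t^\ast)=0$. Then $\phi_i(t^\ast,x)\bigl(1-\phi_i(t^\ast,x)\bigr)=0$ for a.e. $x\in\T$, so $\phi_i(t^\ast,x)\in\{0,1\}$ a.e. Combined with $\phi_i(t^\ast)\in H^1(\T)$, the chain rule identity $\nabla\bigl(\phi_i(1-\phi_i)\bigr)=\bigl(1-2\phi_i\bigr)\nabla\phi_i$ and the fact that $|1-2\phi_i(t^\ast)|=1$ a.e. force $\nabla\phi_i(t^\ast)=0$ a.e. On the connected torus this means $\phi_i(t^\ast)$ is essentially constant and therefore identically $0$ or $1$ a.e. But then $\overline{f(\phi_i(t^\ast))}\in\{0,1\}$, contradicting $\overline{f(\phi_i(t^\ast))}=\omega_i$ under the standing assumption \eqref{tech-assumption}. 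Hence $\beta:=\min_{t\in[0,T]}g_i(t)>0$, establishing \eqref{positivity-1}. This contradiction argument, whose validity hinges crucially on $\omega_i\notin\{0,1\}$, is the main obstacle; the remainder is standard compactness bookkeeping.
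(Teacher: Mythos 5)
Your proposal is correct and follows essentially the same route as the paper: the refined Arzel\`a--Ascoli theorem of Ambrosio--Gigli--Savar\'e combined with the $H^1$ bound and the H\"older-in-time estimate for the compactness step, passage of the nonlinear constraint via strong $L^p$ convergence, and a contradiction argument for \eqref{positivity-1} that uses a chain-rule identity to force $\phi_i(t^\ast)$ to be constant and then invokes $\omega_i\neq 0,1$, with continuity in $t$ upgrading pointwise positivity to the uniform bound $\beta$. The only cosmetic difference is that the paper differentiates $f(\phi_i)$ (approximating $f'$ by bounded functions) and uses $f(0)\neq f(1)$, whereas you differentiate $\phi_i(1-\phi_i)$ directly; both yield the same conclusion.
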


\begin{proof}
To begin with, using \eqref{uniform-H1-bound}, \eqref{time-difference-estimate}, the compact embedding of $H^1(\T)$ into $L^p(\T)$, and a celebrated refined version of the Ascoli-Arzela theorem (see \cite[Proposition 3.3.1]{AmbrosioGigliSavare_Book2008}), we can extract a subsequence $\tau_n\searrow 0^+$, such that
\begin{align}\label{L2-strong-convergence}
\begin{cases}
\phi_{1\tau_n}(t)\rightarrow\phi_1(t)\\
\phi_{2\tau_n}(t)\rightarrow\phi_2(t)
\end{cases} \;\;\mbox{strongly in } L^p(\T),
\;\forall\, t\in [0, T],\ \forall p\in[2,6),
\end{align}
and
\begin{equation}\label{continuity-L2}
\phi_i\in C([0, T]; L^p(\T)), \quad i=1,2.
\end{equation}
Moreover, it is easy to check \eqref{trace-zero} is also valid.

To prove \eqref{positivity-1}, suppose there exists $\tilde{t}\in[0, T]$, such that
\[
f'(\phi_i(\tilde{t}, x))= 6( \phi_i(\tilde{t}, x)-\phi_i^2(\tilde{t}, x) )=0 \quad\mbox{a.e. in
}\,\T.
\]
That is, $\phi_i(\tilde{t}, x)=0$ or $1$ a.e. in $\T$. Approximating $f'$ by bounded functions, we deduce $\nabla f(\phi_i(\tilde{t}, x))=f'(\phi_i(\tilde{t}, x))\nabla\phi_i=0$ a.e. in $\T$. Hence $f(\phi_i(\tilde{t}))$ is constant a.e. in $\T$. Since $f(0)\neq f(1)$, so either $\phi_i(\tilde{t}, x)=0$ a.e. in $\T$ or $\phi_i(\tilde{t}, x)=1$ a.e. in $\T$. But neither case results in \eqref{trace-zero} because  $\omega_i\neq 0, 1$ in \eqref{eqn:Volume}. Therefore, $\int_{\T}\big|f'(\phi_i(t,x))\big|^2dx>0$, $\forall t\in [0, T]$. Thus \eqref{positivity-1} is valid due to the fact that $\phi\in C([0, T]; L^p(\T))$, for $2\leq p<6$.
\end{proof}

\subsection{Euler-Lagrange Equations for the Discrete Sequence}

Before the derivation of the Euler-Lagrange equations for the discrete sequence $\big\{(\phi_{1\tau_n}^k, \phi_{2\tau_n}^k)\big\}$,
we need to first establish from \eqref{positivity-1} the following result concerning the approximation of
the limit curve $(\phi_1, \phi_2)$ by more regular functions. Such approximation is necessary, otherwise some terms in the Lagrange multipliers
could not be kept under control (see Remark \eqref{remark-approximation} below for details)

\begin{proposition}\label{proposition-positivity-2}
Let $(\phi_1, \phi_2)$ be the limits in Proposition \ref{proposition-convergence}, then there exists
$\xi_1, \xi_2\in C([0, T]; W^{2,p}(\T))$, $\forall\, p \in[2,6)$, satisfying
\begin{equation}
\int_{\T}\big|f'(\xi_i(t, x))-f'(\phi_i(t,
x))\big|^2\,\ud{x}\leq\frac{\beta}{16}, \quad\forall\, t\in [0, T],\, i=1,2.
\end{equation}

\end{proposition}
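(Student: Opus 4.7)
The plan is to obtain $\xi_i$ by smoothing the limit curve $\phi_i$ via the resolvent operator $J_\lambda := (I - \lambda\Delta)^{-1}$ on $\T$ (the mollification hinted at in the introduction), setting $\xi_i(t) := J_\lambda \phi_i(t)$ for a sufficiently small $\lambda > 0$ to be chosen at the end. First I would check regularity: standard elliptic regularity on the torus (or a direct Fourier multiplier argument on the symbol $(1+\lambda|k|^2)^{-1}$) shows that for each fixed $\lambda > 0$, $J_\lambda : L^p(\T) \to W^{2,p}(\T)$ is bounded for every $p \in (1,\infty)$. Since $\phi_i \in C([0,T]; L^p(\T))$ for every $p \in [2,6)$ by Proposition \ref{proposition-convergence}, this immediately delivers $\xi_i \in C([0,T]; W^{2,p}(\T))$ for all such $p$.

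Next I would reduce the closeness bound to an $L^p$-closeness of $\xi_i$ and $\phi_i$. Since $f'(s) = 6s(1-s)$,
\[
|f'(\xi_i) - f'(\phi_i)|^2 \le 72|\xi_i - \phi_i|^2 \bigl(1 + |\xi_i + \phi_i|^2\bigr),
\]
so Hölder's inequality yields
\[
\int_{\T} |f'(\xi_i) - f'(\phi_i)|^2 \, \ud{x} \le 72 \|\xi_i - \phi_i\|_{L^2}^2 + 72 \|\xi_i - \phi_i\|_{L^4}^2 \|\xi_i + \phi_i\|_{L^4}^2.
\]
Being the resolvent of a contractive semigroup, $J_\lambda$ is a contraction on every $L^p$, i.e. $\|J_\lambda\|_{L^p \to L^p} \le 1$ for $1 \le p \le \infty$ (equivalently, from $J_\lambda v = \int_0^\infty e^{-r} e^{r\lambda \Delta} v \, dr$). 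Together with the uniform $H^1$ bound from Lemma \ref{estimates-discrete} and the Sobolev embedding $H^1(\T) \hookrightarrow L^4(\T)$, this yields a constant $M$ with $\|\xi_i(t) + \phi_i(t)\|_{L^4} \le M$ independent of $t \in [0,T]$ and $\lambda > 0$.

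It therefore remains to show $\sup_{t \in [0,T]} \|J_\lambda \phi_i(t) - \phi_i(t)\|_{L^p(\T)} \to 0$ as $\lambda \to 0^+$ for $p \in \{2,4\}$; this is the main obstacle. Pointwise in $t$ the convergence is classical: from the same semigroup representation and dominated convergence, $J_\lambda v \to v$ in $L^p(\T)$ for any $v \in L^p(\T)$. Upgrading to uniformity in $t$ is where the continuity from Proposition \ref{proposition-convergence} becomes essential: the orbit $K := \{\phi_i(t) : t \in [0,T]\}$ is compact in $L^p(\T)$ as the continuous image of the compact interval $[0,T]$, and combined with the equicontinuity $\|J_\lambda\|_{L^p \to L^p} \le 1$ a standard finite $\eps$-net argument on $K$ then promotes pointwise convergence at the net points to uniform convergence on $K$. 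Choosing $\lambda$ small enough then bounds the closeness estimate by $\beta/16$ simultaneously for $i = 1, 2$ and finishes the proof.
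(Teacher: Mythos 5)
Your proposal is correct and follows essentially the same route as the paper: both smooth the limit curve by the resolvent $J_\lambda=(I-\lambda\Delta)^{-1}$, use the contraction bound $\|J_\lambda\|\le 1$ together with pointwise convergence $J_\lambda v\to v$ at finitely many points (your $\varepsilon$-net on the compact orbit is the paper's finite partition $t_m=mT/K$ combined with uniform continuity of $t\mapsto\phi_i(t)$ in $L^p$), and conclude by a three-term triangle inequality. Your explicit reduction of the $f'$-closeness to $L^2$/$L^4$-closeness via $f'(s)=6s(1-s)$ and the uniform $H^1$ bound is a detail the paper leaves implicit, but it does not change the argument.
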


\begin{proof}
It suffices to prove for $i=1$. First,
it is easy to check $-\Delta: W^{2, p}(\T)\rightarrow L^p(\T)$
is an infinitesimal generator of a linear semigroup of contractions. For any $\lambda>0$, we consider the resolvent operator
$J_\lambda=(I-\lambda\Delta)^{-1}$. Then $J_\lambda$ is a linear bounded operator from $L^p(\T)$ into itself, and (see \cite[Lemma 2.2.1]{Zheng_Book2004})
\begin{equation}\label{resolvent-bounded}
  \|J_\lambda\|\leq 1, \quad\forall\lambda>0.
\end{equation}
Since $\phi_1\in C([0, T]; L^p(\T))$, $\forall \, \varepsilon>0$, there exists $\tilde\delta=\tilde\delta(\varepsilon)>0$ such that
\begin{equation}\label{small-delta-0}
  \|\phi_1(s)-\phi_1(\tilde{s})\|_{L^p(\T)}<\frac{\varepsilon}{3}, \quad\mbox{whenever }\; |s-\tilde{s}|<\tilde\delta.
\end{equation}
Choosing $K\in\mathbb{N}$ sufficiently big such that $T/K<\tilde\delta$, and letting
$$
  t_m=\frac{mT}{K}, \quad\forall\, 0\leq m\leq K
$$
By \cite[Lemma 2.2.1]{Zheng_Book2004}, there exists $\delta=\delta(\varepsilon)>0$ such that
\begin{equation}\label{small-delta-1}
  \|J_\lambda\phi_1(t_m)-\phi_1(t_m)\|_{L^p(\T)}\leq\frac{\varepsilon}{3},\quad\forall\, 0<\lambda<\delta,\; 0\leq m\leq K.
\end{equation}
%Since $W^{2,p}(\T)$ is dense in $L^p(\T)$, for each $\phi_1(t_m)\in L^p(\T)$, $0\leq m\leq K$, there exists $y_m\in W^{2,p}(\T)$, such that
%\begin{equation}\label{small-delta-2}
%  \|y_m-\phi_1(t_m)\|_{L^p(\T)}\leq\frac{\varepsilon}{9}.
%\end{equation}
%Meanwhile, using \cite[Lemma 2.2.1]{Z04} again we know that there exists $\delta_2=\delta_2(\varepsilon)>0$ such that
%\begin{equation}\label{small-delta-3}
%  \|J_\lambda{y}_m-y_m\|_{L^p(\T)}\leq\frac{\varepsilon}{9},\quad\forall\, 0<\lambda<\delta_2,\; 0\leq m\leq K.
%\end{equation}
%%
%Consequently, taking $\delta=\min\{\delta_1, \delta_2\}$, and using \eqref{resolvent-bounded} as well as \eqref{small-delta-1}-\eqref{small-delta-3}, we obtain
%\begin{align}\label{small-delta-4}
%\|J_\lambda\phi_1(t_m)-\phi_1(t_m)\|_{L^p(\T)}
%&\leq\|J_\lambda\phi_1(t_m)-J_\lambda{y}_m\|_{L^p(\T)}+\|J_\lambda{y}_m-y_m\|_{L^p(\T)}
%+\|y_m-\phi_1(t_m)\|_{L^p(\T)}\nonumber\\
%&\leq \|\phi_1(t_m)-y_m\|_{L^p(\T)}+\|J_\lambda{y}_m-y_m\|_{L^p(\T)}+\|y_m-\phi_1(t_m)\|_{L^p(\T)}\nonumber\\
%&\leq \frac{\varepsilon}{3}, \qquad\qquad\qquad\qquad\qquad\qquad\forall\, \lambda<\delta, \; 0\leq m\leq K.
%\end{align}

In all, for any $t\in [0, T]$, there exists some $t_j$ ($0\leq j\leq K$), such that $|t-t_j|<\tilde\delta$, hence
we get from \eqref{resolvent-bounded}, \eqref{small-delta-0} that
\begin{align*}
\|J_\lambda\phi_1(t)-\phi_1(t)\|_{L^p(\T)}
&\leq\|J_\lambda\phi_1(t)-J_\lambda\phi_1(t_j)\|_{L^p(\T)}+\|J_\lambda\phi_1(t_j)-\phi_1(t_j)\|_{L^p(\T)}
+\|\phi_1(t_j)-\phi_1(t)\|_{L^p(\T)}\nonumber\\
&\leq \|\phi_1(t)-\phi_1(t_j)\|_{L^p(\T)}+\|J_\lambda\phi_1(t_j)-\phi_1(t_j)\|_{L^p(\T)}+\|\phi_1(t_j)-\phi_1(t)\|_{L^p(\T)}\nonumber\\
&\leq \frac{\varepsilon}{3}+\frac{\varepsilon}{3}+\frac{\varepsilon}{3} \leq \varepsilon,
\end{align*}
provided $\lambda<\delta$. Note that $\phi_1(t)\in C([0, T]; L^p(\T))$ and \eqref{resolvent-bounded} implies
$J_\lambda\phi_1(t)\in C([0, T]; W^{2,p}(\T))$.

\smallskip

Finally, choosing $\lambda$ sufficiently small and setting $\xi_1=J_\lambda\phi_1$, we finish the proof.
\end{proof}

To proceed our proof, we shall show that along the decreasing sequence $\{\tau_n\}$, the minimizers to $F_{\tau_n}$ satisfy the following Euler-Lagrange equations provided $n$ is sufficiently large. To simplify the notation, we denote $N_n = \lfloor T/\tau_n \rfloor$.

\begin{remark}
It is worth mentioning that we only consider the rest of the sequence $\{\tau_n\}$ as $n$ becomes large enough
because it ensures the denominator in the lagrange multipliers will be kept away from zero, see \eqref{positivity-main} below.
\end{remark}

\begin{lemma}\label{lemma-discrete-EL}
There exists $N=N(\beta)\in\mathbb{N}$, such that $\forall n\geq N$,
we have
\begin{align}\label{equation-discrete-EL}
&\int_{\T}\Big[\dfrac{\phi_{i\tau_n}^{k+1}-\phi_{i\tau_n}^k}{\tau_n}-\frac12\Delta\phi_{1\tau_n}^{k+1}-\frac12\Delta\phi_{2\tau_n}^{k+1}
-\frac12\Delta\phi_{i\tau_n}^{k+1}+ \dfrac{1}{2}W'(\phi_{i\tau_n}^{k+1}) - \dfrac{1}{2} W'(1-\phi_{1\tau_n}^{k+1}-\phi_{2\tau_n}^{k+1}) \non\\
&\qquad+\sum_{j=1}^2\gamma_{ij}(-\Delta)^{-1}\big(f(\phi_{j\tau_n}^{k+1})-\omega_j)f'(\phi_{i\tau_n}^{k+1})
+\lambda_{i\tau_n}^{k+1}f'(\phi_{i\tau_n}^{k+1})\Big]v_i(x)\,\ud{x}=0,\non\\
&\qquad\forall\,0\leq
k\leq N_n-1,\;\forall\,v_i\in H^1(\T),\; i = 1, 2,
\end{align}
where $\lambda_{i\tau_n}^{k+1}$ is given by \eqref{lagrange-multiplier}.
Further, it holds
\begin{align}\label{H2-bound-discrete}
&\Big\|\frac12\Delta\phi_{1\tau_n}^{k+1}+\frac12\Delta\phi_{2\tau_n}^{k+1}
+\frac12\Delta\phi_{i\tau_n}^{k+1}- \dfrac{1}{2}W'(\phi_{i\tau_n}^{k+1}) + \dfrac{1}{2}W'(1-\phi_{1\tau_n}^{k+1}-\phi_{2\tau_n}^{k+1})\non\\
&\qquad-\sum_{j=1}^2\gamma_{ij}(-\Delta)^{-1}\big(f(\phi_{j\tau_n}^{k+1})-\omega_j)f'(\phi_{i\tau_n}^{k+1})
-\lambda_{i\tau_n}^{k+1}f'(\phi_{i\tau_n}^{k+1})\Big\|_{L^2(\T)}\non\\
\leq&\dfrac{\|\phi_{i\tau_n}^{k+1}-\phi_{i\tau_n}^{k}\|_{L^2(\T)}}{\tau_n},
\qquad\qquad\forall\,0\leq k\leq N_n-1,\; i = 1, 2.
\end{align}

\end{lemma}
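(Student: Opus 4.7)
The plan is to apply the classical Lagrange-multiplier technique to the nonlinear constraint $\overline{f(\phi_i)} = \omega_i$ via the implicit function theorem. The novelty (as flagged in the introduction) lies in the choice of the admissible variation direction: rather than the canonical $f'(\phi_{i\tau_n}^{k+1})$---for which no uniform lower bound on the associated denominator is available at the discrete level---I would use $w_i := f'(\xi_i((k+1)\tau_n))$, where $\xi_i$ is the mollified approximating curve from Proposition \ref{proposition-positivity-2}.

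First I would upgrade the pointwise-in-$t$ convergence of \eqref{Lp-strong-convergence} to uniform convergence on $[0,T]$. This is a three-epsilon argument that uses the equicontinuity estimate \eqref{time-difference-estimate} (uniform in $n$) together with the continuity $\phi_i \in C([0,T]; L^p(\T))$: cover $[0,T]$ by finitely many short subintervals and invoke pointwise convergence at the grid points. The elementary bound $|f'(a)-f'(b)| \leq 6|a-b|(1+|a|+|b|)$ combined with H\"older's inequality in $L^4$ and the uniform $H^1 \hookrightarrow L^4$ bound \eqref{uniform-H1-bound} then gives $\sup_{0\leq k\leq N_n - 1}\|f'(\phi_{i\tau_n}^{k+1}) - f'(\phi_i((k+1)\tau_n))\|_{L^2(\T)} \to 0$ as $n\to\infty$. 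Combining this with Proposition \ref{proposition-positivity-2} (which yields $\|f'(\xi_i(t)) - f'(\phi_i(t))\|_{L^2} \leq \sqrt{\beta}/4$) and the lower bound $\|f'(\phi_i(t))\|_{L^2} \geq \sqrt{\beta}$ from \eqref{positivity-1}, the triangle inequality produces, for all $n$ sufficiently large and all admissible $k$, the key estimate $\int_{\T} f'(\phi_{i\tau_n}^{k+1})\,w_i\,\ud x \geq \beta/4$, $i = 1, 2$.

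Fix such $n$, $k$, $i$ and a test function $v_i \in H^1(\T)$, and set $g(\eta,\mu) := \overline{f\bigl(\phi_{i\tau_n}^{k+1} + \eta v_i + \mu w_i\bigr)} - \omega_i$. Then $g(0,0) = 0$ and, by the previous step, $\partial_\mu g(0,0) \neq 0$. The implicit function theorem produces a unique $C^1$ map $\eta\mapsto\mu(\eta)$ on a neighborhood of $0$ with $g(\eta,\mu(\eta)) = 0$, $\mu(0) = 0$, and
\begin{equation*}
\mu'(0) \;=\; -\,\frac{\int_{\T}f'(\phi_{i\tau_n}^{k+1})\, v_i\,\ud x}{\int_{\T}f'(\phi_{i\tau_n}^{k+1})\, w_i\,\ud x}.
\end{equation*}
Since $\phi_{i\tau_n}^{k+1} + \eta v_i + \mu(\eta)w_i \in H^1_{\omega_i}$ for $|\eta|$ small, paired with $\phi_{j\tau_n}^{k+1}$ ($j \ne i$) it is an admissible competitor in \eqref{def-discrete-sequence}; differentiating $F_{\tau_n}$ at $\eta = 0$ and invoking minimality yields exactly \eqref{equation-discrete-EL} with
\begin{equation*}
\lambda_{i\tau_n}^{k+1} \;=\; -\,\frac{\int_{\T}\bigl[\tfrac{\phi_{i\tau_n}^{k+1}-\phi_{i\tau_n}^{k}}{\tau_n} + \tfrac{\delta E}{\delta\phi_i}(\phi_{1\tau_n}^{k+1},\phi_{2\tau_n}^{k+1})\bigr]w_i\,\ud x}{\int_{\T}f'(\phi_{i\tau_n}^{k+1})\,w_i\,\ud x}.
\end{equation*}
The $W^{2,p}$-regularity of $\xi_i$ (whence $w_i \in H^1 \cap L^\infty$) is essential here: the limit $\phi_i$ is only $H^1$ in $x$, so $\nabla f'(\phi_i) = 6(1-2\phi_i)\nabla\phi_i$ lives only in $L^{3/2}$, meaning $f'(\phi_i)\not\in H^1$ in general and the integration-by-parts pairing $\int\Delta\phi_{\ell\tau_n}^{k+1}\,w_i\,\ud x = -\int\nabla\phi_{\ell\tau_n}^{k+1}\cdot\nabla w_i\,\ud x$ would be uncontrolled. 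This is exactly the obstruction alluded to immediately before Proposition \ref{proposition-positivity-2}.

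Finally, the bound \eqref{H2-bound-discrete} follows from \eqref{equation-discrete-EL} by elliptic regularity and duality. All the non-Laplacian terms in the EL equation already lie in $L^2(\T)$ (using $H^1\hookrightarrow L^6$, the cubic growth of $W'$, and $H^2\hookrightarrow L^\infty$ in three dimensions for the nonlocal Riesz-type term), so the weak equation forces $\Delta\phi_{i\tau_n}^{k+1} + \tfrac12\Delta\phi_{j\tau_n}^{k+1} \in L^2(\T)$; inverting the $2\times 2$ coefficient matrix in $(i,j)$ recovers $\Delta\phi_{i\tau_n}^{k+1}\in L^2$ individually. The EL identity therefore holds pointwise in $L^2(\T)$, and \eqref{H2-bound-discrete} is obtained by pairing with $v_i$ in a dense subset of the unit $L^2$-ball and applying Cauchy-Schwarz. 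The principal obstacle throughout is the uniform positivity of the denominator above; once that is secured, the rest of the argument is essentially routine.
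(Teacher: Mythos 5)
Your proposal is correct and follows the paper's strategy in all essential respects: the same choice of variation direction $w_i=f'(\xi_i((k+1)\tau_n))$ built from the regularized limit curve of Proposition \ref{proposition-positivity-2}, the same triangle-inequality argument combining \eqref{positivity-1} with the approximation bound to keep the denominator $\int_{\T}f'(\phi_{i\tau_n}^{k+1})w_i\,\ud{x}$ uniformly away from zero, and the same two-parameter implicit-function-theorem derivation of the Euler--Lagrange equation. Two points where you diverge are worth noting. First, you explicitly upgrade the pointwise-in-$t$ convergence \eqref{Lp-strong-convergence} to uniform convergence on $[0,T]$ via the equicontinuity estimate \eqref{time-difference-estimate}; the paper asserts the uniform-in-$t$ bound \eqref{nonzero-condition} directly from \eqref{Lp-strong-convergence} and \eqref{positivity-1} without spelling this step out, so your treatment actually closes a small gap rather than introducing one. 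Second, for \eqref{H2-bound-discrete} the paper constructs an explicit $H^2$ approximation $\eta_{i\eps}=(I-\eps\Delta)^{-1}(\text{RHS})$ of the $L^2$ right-hand side, tests against it, and passes $\eps\to0$; your density-of-$H^1$-in-$L^2$ duality argument reaches the same conclusion more directly (and, as you implicitly observe, yields equality rather than just the inequality). Both mechanisms are valid; the paper's resolvent device is simply a constructive version of your density argument.
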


\begin{proof}

First of all, by \eqref{Lp-strong-convergence} and \eqref{positivity-1}, there exists
$N=N(\beta)\in\mathbb{N}$, such that $\forall n > N$ and $\forall t\in[0,T]$,
\begin{equation}\label{nonzero-condition}
\int_{\T}\big|f'(\phi_{i\tau_n}(t,x))\big|^2\,\ud{x}\geq\frac{7\beta}{8},\;\int_{\T}\big|f'(\phi_{i\tau_n}(t,
x))-f'(\phi_i(t, x))\big|^2\,\ud{x}\leq\frac{\beta}{16},
\end{equation}
which together with Proposition \ref{proposition-positivity-2} gives
\begin{align*}
&\int_{\T}\big|f'(\phi_{i\tau_n}(t, x))-f'(\xi_i(t,x))\big|^2\,\ud{x} \\
\leq & 2\int_{\T}\big|f'(\phi_{i\tau_n}(t,x))-f'(\phi_i(t,x))\big|^2\,\ud{x}+2\int_{\T}\big|f'(\phi_i(t,x))-f'(\xi_i(t,x))\big|^2\,\ud{x} \leq\frac{\beta}{4},
\end{align*}
As a consequence, by \eqref{def-discrete-sequence}, $\forall\,n\geq N$, $\forall\,0\leq k\leq N_n$,  it turns out that
\begin{align*}
&\int_{\T}f'(\phi_{i\tau_n}^k(x))f'(\xi_i(\tau_nk,x)\,\ud{x}\non\\
=&\int_{\T}\big|f'(\phi_{i\tau_n}^k(x)\big|^2\,\ud{x}+\int_{\T}f'(\phi_{i\tau_n}^k(x)\big[f'(\xi_i(\tau_nk,x)-f'(\phi_{i\tau_n}^k(x))\big]\,\ud{x}\non\\
=&\int_{\T}\big|f'(\phi_{i\tau_n}(\tau_nk,x))\big|^2\,\ud{x}
+\int_{\T}f'(\phi_{i\tau_n}(\tau_nk,x)\big[f'(\xi_i(\tau_nk,x)-f'(\phi_{i\tau_n}(\tau_nk,x))\big]\,\ud{x}\non\\
\geq&\int_{\T}\big|f'(\phi_{i\tau_n}(\tau_nk,x))\big|^2\,\ud{x}
-\Big\|f'(\phi_{i\tau_n}(\tau_nk,x))\Big\|_{L^2(\T)}
\Big\|f'(\phi_{i\tau_n}(\tau_nk,x))-f'(\xi_i(\tau_nk,x))\Big\|_{L^2(\T)}\non\\
\geq&\sqrt{\frac{7\beta}{8}}\big(\sqrt{\frac{7\beta}{8}}-\frac{\sqrt{\beta}}{2}\big) >\frac{\beta}{8}.
\end{align*}
Therefore, given each $\phi_{i\tau_n}^k\in H_{\omega_i}^1$, let us choose
\begin{equation}\label{test-function}
w_{i\tau_n}^k(x)=f'(\xi_i(\tau_nk, x)),
\end{equation}
which yields
\begin{equation}\label{positivity-main}
\int_{\T}f'(\phi_{i\tau_n}^{k+1}(x))w_{i\tau_n}^{k+1}(x)\,\ud{x}>\frac{\beta}{8},\qquad \forall\,n\geq N, \quad \forall\,0\leq
k\leq N_n-1.
\end{equation}

Consequently, we derive that $\phi_{i\tau_n}^{k+1}$ satisfies
the Euler-Lagrange equation (see the appendix for details)
\begin{align}\label{eqn:EulerLagrange}
0&=\int_{\T}\Big[\dfrac{\phi_{i\tau_n}^{k+1}-\phi_{i\tau_n}^k}{\tau_n}+ \dfrac{1}{2}W'(\phi_{i\tau_n}^{k+1}) - \dfrac{1}{2}W'(1-\phi_{1\tau_n}^{k+1}-\phi_{2\tau_n}^{k+1}) \Big]v_i(x)\,\ud{x}\non\\
&\qquad+\int_{\T}\Big[\sum_{j=1}^2\gamma_{ij}(-\Delta)^{-1}\big(f(\phi_{j\tau_n}^{k+1})-\omega_j)f'(\phi_{i\tau_n}^{k+1})
+\lambda_{i\tau_n}^{k+1}f'(\phi_{i\tau_n}^{k+1})\Big]v_i(x)\,\ud{x}\non\\
&\qquad+\int_{\T}\Big[\frac12\nabla\phi_{1\tau_n}^{k+1}+\frac12\nabla\phi_{2\tau_n}^{k+1}+\frac12\nabla\phi_{i\tau_n}^{k+1}\Big]\cdot \nabla{v}_i\,\ud{x}
\end{align}
where the corresponding Lagrange multiplier is given by
\begin{align}\label{lagrange-multiplier}
\lambda_{i\tau_n}^{k+1}=&-\dfrac{1}{{\int_{\T}f'(\phi_{i\tau_n}^{k+1})w_{i\tau_n}^{k+1}\,\ud{x}}}
\left[\frac{1}{\tau_n}\int_{\T}(\phi_{i\tau_n}^{k+1}-\phi_{i\tau_n}^{k})w_{i\tau_n}^{k+1}\,\ud{x}+
\frac12\int_{\T}\big(\nabla\phi_{1\tau_n}^{k+1}+\nabla\phi_{2\tau_n}^{k+1}+\nabla\phi_{i\tau_n}^{k+1}\big)\nabla w_{i\tau_n}^{k+1}\,\ud{x}\right]\non\\
&-\dfrac{1}{{\int_{\T}f'(\phi_{i\tau_n}^{k+1}) w_{i\tau_n}^{k+1}\,\ud{x}}}\int_{\T}
\big[W'(\phi_{i\tau_n}^{k+1})-W'(1-\phi_{1\tau_n}^{k+1}-\phi_{2\tau_n}^{k+1})\big] w_{i\tau_n}^{k+1}\,\ud{x}\non\\
&-\dfrac{1}{{\int_{\T}f'(\phi_{i\tau_n}^{k+1}) w_{i\tau_n}^{k+1}\,\ud{x}}}\int_{\T}\Big[\sum_{j=1}^2\gamma_{ij}(-\Delta)^{-1}\big(f(\phi_{j\tau_n}^{k+1})-\omega_j)f'(\phi_{i\tau_n}^{k+1})\Big] w_{i\tau_n}^{k+1}\,\ud{x}
\end{align}

Meanwhile, note that
$(\phi_{i\tau_n}^{k+1}-\phi_{i\tau_n}^{k})/{\tau_n}$,
$W'(\phi_{i\tau_n}^{k+1})$, $W'(1-\phi_{1\tau_n}^{k+1}-\phi_{2\tau_n}^{k+1})$,
$(-\Delta)^{-1}\big(f(\phi_{j\tau_n}^{k+1})-\omega_j\big)f'(\phi_{i\tau_n}^{k+1})$,
$\lambda_{i\tau_n}^{k+1}f'(\phi_{i\tau_n}^{k+1})$ are all in $L^2(\T)$, hence
$\Delta\phi_{1\tau_n}^{k+1}+\Delta\phi_{2\tau_n}^{k+1}+\Delta\phi_{i\tau_n}^{k+1}\in L^2(\T)$ and
\eqref{equation-discrete-EL} is derived.

\smallskip

To proceed further, for sufficiently small $\eps>0$ we denote $\eta_{i\eps}\in H^2(\T)$ the unique solution (see for instance, \cite[Proposition 7.1]{Brezis_Book2011}) to the elliptic problem
\begin{align*}
\eta_{i\eps}-\eps\Delta\eta_{i\eps}&=\frac12\Delta\phi_{1\tau_n}^{k+1}+\frac12\Delta\phi_{2\tau_n}^{k+1}
+\frac12\Delta\phi_{i\tau_n}^{k+1}-W'(\phi_{i\tau_n}^{k+1})+W'(1-\phi_{1\tau_n}^{k+1}-\phi_{2\tau_n}^{k+1})\\
&\qquad-\sum_{j=1}^2\gamma_{ij}(-\Delta)^{-1}\big(f(\phi_{j\tau_n}^{k+1})-\omega_j)f'(\phi_{i\tau_n}^{k+1})
-\lambda_{i\tau_n}^{k+1}f'(\phi_{i\tau_n}^{k+1}).
\end{align*}
It follows from classical result (see for instance, \cite[Proposition 7.2]{Brezis_Book2011}) that as $\eps\rightarrow 0^+$
\begin{align}\label{Hille-Yosida}
\eta_{i\eps} &\longrightarrow \frac12\Delta\phi_{1\tau_n}^{k+1}+\frac12\Delta\phi_{2\tau_n}^{k+1}
+\frac12\Delta\phi_{i\tau_n}^{k+1}-W'(\phi_{i\tau_n}^{k+1})+W'(1-\phi_{1\tau_n}^{k+1}-\phi_{2\tau_n}^{k+1})\non\\
&\qquad\quad-\sum_{j=1}^2\gamma_{ij}(-\Delta)^{-1}\big(f(\phi_{j\tau_n}^{k+1})-\omega_j)f'(\phi_{i\tau_n}^{k+1})
-\lambda_{i\tau_n}^{k+1}f'(\phi_{i\tau_n}^{k+1})
\qquad\mbox{in } L^2(\T) .
\end{align}
Moreover, by choosing $v_i=\eta_{i\eps}$ in \eqref{equation-discrete-EL}
we get after integration by parts that
\begin{equation}\label{Schwarz-inequality}
\|\eta_{i\eps}\|_{L^2(\T)}^2+\eps\|\nabla\eta_{i\eps}\|_{L^2(\T)}^2=-\int_{\T}\dfrac{\phi_{i\tau_n}^{k+1}-\phi_{i\tau_n}^{k}}{\tau_n}\eta_{i\eps}(x)\,\ud{x}
\leq\dfrac{\|\phi_{i\tau_n}^{k+1}-\phi_{i\tau_n}^{k}\|_{L^2(\T)}}{\tau_n}\|\eta_{i\eps}\|_{L^2(\T)}.
\end{equation}
Hence \eqref{H2-bound-discrete} is proved by combining
\eqref{Hille-Yosida} and \eqref{Schwarz-inequality}.
\end{proof}

\begin{remark}\label{remark-approximation}
The main motivation to use the approximate $\xi_i$ given in Proposition \ref{proposition-positivity-2} is due to the term
\[
\int_{\T}\nabla\phi_{j\tau_n}^{k+1}\nabla w_{i\tau_n}^{k+1}=
6\int_{\T}\nabla\phi_{j\tau_n}^{k+1}\nabla\big[\xi_i(\tau_nk+\tau_n)-\xi_i^2(\tau_nk+\tau_n)\big]
\]
in the Lagrange multiplier \eqref{lagrange-multiplier}. If we simply use $\phi_i$ instead of $\xi_i$, the RHS above might not be integrable.
\end{remark}

From now on in this Section and the Appendix, when we say `` for $n>N$ ", it is always the $N = N(\beta)$ given in Lemma \ref{lemma-discrete-EL}.

For each fixed $n\geq N$, based on the Lagrange multipliers $\lambda_{i\tau_n}^k$, $i=1,2$, we introduce a piecewise constant interpolation $t\in [0, T]\mapsto\big(\lambda_{1\tau_n}(t), \lambda_{2\tau_n}(t)\big)$ by
\begin{equation}\label{def-piecewise-interpolation-2}
\lambda_{i\tau_n}(t)=\lambda_{i\tau_n}^k,
\quad\mbox{for }  \tau_n k\leq
t< \tau_n(k+1),
\end{equation}
where $\lambda_{i\tau_n}^k$ are given in \eqref{lagrange-multiplier}. Then for the piecewise-constant interpolation functional sequence $\{(\phi_{1\tau_n}, \phi_{2\tau_n})\}$ ($n\geq N$) defined in (\ref{def-piecewise-interpolation}), one may further retrieve the following {\it{a priori}} estimates.

\begin{lemma}\label{lemma-H2-integrability}
There exists a constant $C>0$ that may only depend on $T$, $\phi_{10}$, $\phi_{20}$, $\omega_1$, $\omega_2$, $\beta$, and $\gamma_{ij}$ ($i,j=1,2$), such that along the sequence $\{\tau_n\}$, it holds  that for $\forall\, n\geq N$
\begin{equation}
\int_{0}^{T}\|\phi_{i\tau_n}(t)\|_{H^2(\T)}^2\,\ud{t}\leq C, \qquad i=1,2.\label{H2-integrability}
\end{equation}
\end{lemma}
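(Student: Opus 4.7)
The plan is to derive a pointwise-in-$k$ bound on $\|\Delta\phi_{i\tau_n}^{k+1}\|_{L^2(\T)}$ starting from \eqref{H2-bound-discrete}, then square, multiply by $\tau_n$, and sum over $k=0,\ldots,N_n-1$; the energy telescoping \eqref{discrete-inequality} will absorb the discrete time-difference term, and a careful analysis of the Lagrange multiplier will handle the remaining contribution. Concretely, I would first read off from \eqref{H2-bound-discrete} the linear system
\begin{align*}
\Delta\phi_{1\tau_n}^{k+1}+\tfrac12\Delta\phi_{2\tau_n}^{k+1}&=R_1^{k+1},\\
\tfrac12\Delta\phi_{1\tau_n}^{k+1}+\Delta\phi_{2\tau_n}^{k+1}&=R_2^{k+1},
\end{align*}
where the residuals $R_i^{k+1}$ collect the $W'$-terms, the nonlocal term $\sum_{j}\gamma_{ij}(-\Delta)^{-1}(f(\phi_{j\tau_n}^{k+1})-\omega_j)f'(\phi_{i\tau_n}^{k+1})$, the Lagrange contribution $\lambda_{i\tau_n}^{k+1}f'(\phi_{i\tau_n}^{k+1})$, and the discrete velocity $(\phi_{i\tau_n}^{k+1}-\phi_{i\tau_n}^{k})/\tau_n$. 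The $2\times 2$ coefficient matrix has determinant $3/4$, so inversion yields $\|\Delta\phi_{i\tau_n}^{k+1}\|_{L^2}\le C(\|R_1^{k+1}\|_{L^2}+\|R_2^{k+1}\|_{L^2})$.

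Next I would estimate each piece of $R_i^{k+1}$ in $L^2$: the polynomial potential terms $W'(\phi_{i\tau_n}^{k+1})$ and $W'(1-\phi_{1\tau_n}^{k+1}-\phi_{2\tau_n}^{k+1})$ are controlled by the uniform $H^1$-bound \eqref{uniform-H1-bound} and the Sobolev embedding $H^1(\T)\hookrightarrow L^6(\T)$, which makes these cubic expressions bounded in $L^2$; the nonlocal term is handled via Lemma \ref{lemma-key-inequality} combined with the 3D embedding $H^2(\T)\hookrightarrow L^\infty(\T)$ applied to $(-\Delta)^{-1}(f(\phi_{j\tau_n}^{k+1})-\omega_j)$, together with the $L^2$-bound on $f'(\phi_{i\tau_n}^{k+1})$. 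The discrete velocity is kept as $\|\phi_{i\tau_n}^{k+1}-\phi_{i\tau_n}^{k}\|_{L^2}/\tau_n$, to be absorbed after summation by
$$\sum_{k=0}^{N_n-1}\frac{\|\phi_{i\tau_n}^{k+1}-\phi_{i\tau_n}^{k}\|_{L^2}^2}{\tau_n}\le 2E(\phi_{10},\phi_{20}),$$
a direct telescoping of \eqref{discrete-inequality}.

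The heart of the argument, and the point at which Proposition \ref{proposition-positivity-2} is essential, is the control of $\lambda_{i\tau_n}^{k+1}$ from \eqref{lagrange-multiplier}. Its denominator is bounded below by $\beta/8$ via \eqref{positivity-main}; for the numerator, choosing $p\in(3,6)$ in Proposition \ref{proposition-positivity-2} gives $\xi_i\in C([0,T];W^{2,p})\hookrightarrow C([0,T];L^\infty)$, hence uniform $L^\infty\cap H^1$ bounds on $w_{i\tau_n}^{k+1}=f'(\xi_i(\tau_n(k+1),\cdot))$. The potential, nonlocal, and gradient integrals in the numerator are then all bounded by a universal constant $C_1$, while Cauchy-Schwarz applied to $\frac{1}{\tau_n}\int(\phi_{i\tau_n}^{k+1}-\phi_{i\tau_n}^{k})w_{i\tau_n}^{k+1}\,\ud{x}$ yields a contribution of size $C_2\|\phi_{i\tau_n}^{k+1}-\phi_{i\tau_n}^{k}\|_{L^2}/\tau_n$. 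Thus
$$|\lambda_{i\tau_n}^{k+1}|\le C_1+\frac{C_2\|\phi_{i\tau_n}^{k+1}-\phi_{i\tau_n}^{k}\|_{L^2}}{\tau_n},$$
so that $\tau_n(\lambda_{i\tau_n}^{k+1})^2\le 2C_1^2\tau_n+2C_2^2\|\phi_{i\tau_n}^{k+1}-\phi_{i\tau_n}^{k}\|_{L^2}^2/\tau_n$.

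Finally, squaring the bound on $\|\Delta\phi_{i\tau_n}^{k+1}\|_{L^2}$, multiplying by $\tau_n$, summing over $k=0,\ldots,N_n-1$, and invoking the telescoped energy inequality together with the uniform $H^1$-bound and elliptic regularity on $\T$ (i.e.\ $\|u\|_{H^2}\lesssim\|u\|_{L^2}+\|\Delta u\|_{L^2}$), one obtains $\sum_{k}\tau_n\|\phi_{i\tau_n}^{k+1}\|_{H^2}^2\le C$, from which \eqref{H2-integrability} follows. The main obstacle is precisely the Lagrange-multiplier estimate: without the regularized approximation $\xi_i$, the numerator of \eqref{lagrange-multiplier} contains the term $\int\nabla\phi_{j\tau_n}^{k+1}\cdot\nabla w_{i\tau_n}^{k+1}\,\ud{x}$ that, as observed in Remark \ref{remark-approximation}, need not even be integrable if one takes $w$ built from $\phi_i$ itself; only the $W^{2,p}$-regularity of $\xi_i$ provides the uniform-in-$n,k$ bound needed to close the estimate.
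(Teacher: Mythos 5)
Your proposal is correct and follows essentially the same route as the paper's proof: the telescoping of \eqref{discrete-inequality} to absorb the discrete velocity, the splitting of $\lambda_{i\tau_n}^{k+1}$ into a uniformly bounded part and a part of size $\|\phi_{i\tau_n}^{k+1}-\phi_{i\tau_n}^k\|_{L^2}/\tau_n$ (controlled below by \eqref{positivity-main} and above by the $W^{2,p}$-regularity of $\xi_i$), and the algebraic inversion of the $2\times2$ Laplacian system to isolate $\Delta\phi_{1\tau_n}$ and $\Delta\phi_{2\tau_n}$ are all the same ingredients the paper uses. The only cosmetic differences are that you invert the linear system pointwise in $k$ before summing whereas the paper does it at the level of the time-integrated quantities, and you bound the nonlocal term via $H^2(\T)\hookrightarrow L^\infty(\T)$ rather than the paper's $L^6\times L^3$ H\"older pairing; both are equivalent.
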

\begin{proof}
Consider \eqref{H2-bound-discrete} for any fixed $n\geq N$. Summing over $k$ from $0$ to $N_n-2$, we get from \eqref{discrete-inequality} that
\begin{align}\label{L2-integrability}
&\int_{\tau_n}^{N_n\tau_n}
\Big\|\frac12\Delta\phi_{1\tau_n}(t)+\frac12\Delta\phi_{2\tau_n}(t)
+\frac12\Delta\phi_{i\tau_n}(t) - W'(\phi_{i\tau_n}(t))+W'(1-\phi_{1\tau_n}(t)-\phi_{2\tau_n}(t))\non\\
&\qquad\quad-\sum_{j=1}^2\gamma_{ij}(-\Delta)^{-1}\Big(f(\phi_{j\tau_n}(t))-\omega_j\Big)f'(\phi_{i\tau_n}(t))
-\lambda_{i\tau_n}(t) f'(\phi_{i\tau_n}(t))\Big\|_{L^2(\T)}^2\,\ud{t}\non\\
\leq&\displaystyle\sum_{k=0}^{N_n-2}\int_{(k+1)\tau_n}^{(k+2)\tau_n}
\Big\|\frac12\Delta\phi_{1\tau_n}^{k+1}+\frac12\Delta\phi_{2\tau_n}^{k+1}
+\frac12\Delta\phi_{i\tau_n}^{k+1}-W'(\phi_{i\tau_n}^{k+1})+W'(1-\phi_{1\tau_n}^{k+1}-\phi_{2\tau_n}^{k+1})\non\\
&\qquad\qquad\quad-\sum_{j=1}^2\gamma_{ij}(-\Delta)^{-1}\big(f(\phi_{j\tau_n}^{k+1})-\omega_j)f'(\phi_{i\tau_n}^{k+1})
-\lambda_{i\tau_n}^{k+1}f'(\phi_{i\tau_n}^{k+1})\Big\|_{L^2(\T)}^2\,\ud{t}\non\\
\leq&\displaystyle\sum_{k=0}^{N_n-2}\dfrac{\big\|\phi_{i\tau_n}^{k+1}-\phi_{i\tau_n}^{k}\big\|_{L^2(\T)}^2}{\tau_n} \leq 2E(\phi_{1\tau_n}^0, \phi_{2\tau_n}^0)= 2E(\phi_{10}, \phi_{20}).
\end{align}
From \eqref{HLS-1}, H\"{o}lder's inequality and
\eqref{uniform-H1-bound}, we obtain that for $\forall\,n\geq N$, $\forall\, t\in[0,T]$ and $j=1,2$
\begin{align}\label{estimate-1}
& \big \|(-\Delta)^{-1}\big(f(\phi_{j\tau_n}(t))-\omega_j\big)f'(\phi_{i\tau_n}(t))\big\|_{L^2(\T)} \non\\
\leq &\big\|G\ast(f(\phi_{j\tau_n}(t))-\omega_j)\big\|_{L^6(\T)}\|f'(\phi_{i\tau_n}(t))\|_{L^3(\T)} \non\\
\leq & \  C\|f(\phi_{j\tau_n}(t))-\omega_j\|_{L^\frac{6}{5}(\T)}\big(\|\phi_{i\tau_n}(t)\|_{H^1(\T)}^3+1\big) \non\\
\leq & C(\phi_{10}, \phi_{20}, \omega_1, \omega_2),
\end{align}
Furthermore, in \eqref{lagrange-multiplier} let us denote
\begin{align*}\label{lagrange-multiplier-part-2}
\tilde{\lambda}_{i\tau_n}^{k+1}=-\frac{\frac{1}{\tau_n}\int_{\T}(\phi_{i\tau_n}^{k+1}-\phi_{i\tau_n}^{k})w_{i\tau_n}^{k+1}\,\ud{x}}
{\int_{\T}f'(\phi_{i\tau_n}^{k+1})w_{i\tau_n}^{k+1}\,\ud{x}}
\end{align*}
It is easy to infer from \eqref{HLS-1}, \eqref{estimates-discrete},\eqref{positivity-main} that for $\forall\,n\geq N, 0\leq k\leq N_n-1$, it holds
\begin{align*}
\big|\lambda_{i\tau_n}^{k+1}-\tilde{\lambda}_{i\tau_n}^{k+1}\big|
\leq&\dfrac{C}{\beta}\Big[\big(\|\nabla\phi_{1\tau_n}^{k+1}\|_{L^2}+\|\nabla\phi_{2\tau_n}^{k+1}\|_{L^2}\big)\|\nabla w_{i\tau_n}^{k+1}\|_{L^2}\\
&\quad+\big(\|W'(\phi_{i\tau_n}^{k+1})\|_{L^2}+\|W'(1-\phi_{1\tau_n}^{k+1}-\phi_{2\tau_n}^{k+1})\|_{L^2}\big) \| w_{i\tau_n}^{k+1}\|_{L^2}\\
&\qquad+\sum_{j=1}^2|\gamma_{ij}|\cdot \big\|G\ast(f(\phi_{j\tau_n}^{k+1})-\omega_j)\big\|_{L^6}\|f'(\phi_{i\tau_n}^{k+1})\|_{L^3}\| w_{i\tau_n}^{k+1}\|_{L^2}\Big]\\
\leq & C(\phi_{10}, \phi_{20}, \omega_1, \omega_2, \beta, \gamma_{i1}, \gamma_{i2}).
\end{align*}
Using \eqref{positivity-main} we have
\begin{equation*}
|\tilde{\lambda}_{i\tau_n}^{k+1}|\leq\frac{8}{\beta}\frac{\|\phi_{\tau_n}^{k+1}-\phi_{\tau_n}^{k}\|_{L^2}\| w_{i\tau_n}^{k+1}\|_{L^2}}{\tau_n}
\leq
C(\beta)\frac{\|\phi_{\tau_n}^{k+1}-\phi_{\tau_n}^{k}\|_{L^2}}{\tau_n}.
\end{equation*}
Henceforth we obtain
\begin{align}\label{estimate-2}
\int_{\tau_n}^{N_n\tau_n}\big\|\lambda_{i\tau_n}f'(\phi_{i\tau_n}(t))\big\|_{L^2(\T)}^2\,\ud{t}
&\leq\displaystyle\sum_{k=0}^{N_n-2}\int_{(k+1)\tau_n}^{(k+2)\tau_n}2\big(|\lambda_{i\tau_n}^k-\tilde{\lambda}_{i\tau_n}^k|^2+|\tilde{\lambda}_{i\tau_n}^k|^2\big)
\big\|f'(\phi_{i\tau_n}^{k+1})\big\|_{L^2(\T)}^2\,\ud{t}\non\\
&\leq
C(\phi_{10}, \phi_{20}, \omega_1, \omega_2, \beta, \gamma_{i1},\gamma_{i2})+C(\beta)\displaystyle\sum_{k=0}^{N_n-2}\dfrac{\big\|\phi_{i\tau_n}^{k+1}-\phi_{i\tau_n}^{k}\big\|_{L^2(\T)}^2}{\tau_n}\non\\
&\leq C(T, \phi_{10}, \phi_{20}, \omega_1, \omega_2, \beta, \gamma_{i1}, \gamma_{i2}).
\end{align}
In all, summing up Young's inequality, \eqref{uniform-H1-bound},
\eqref{L2-integrability}, \eqref{estimate-1}, \eqref{estimate-2}, we conclude that
\begin{align}\label{Laplace-bound-1}
&\int_{\tau_n}^{N_n\tau_n}\big\|\Delta\phi_{1\tau_n}(t)+\Delta\phi_{2\tau_n}(t)+\Delta\phi_{i\tau_n}(t)\big\|_{L^2(\T)}^2\,\ud{t}\non\\
\leq & \ 16E(\phi_{10},\phi_{20})+16\int_{\tau_n}^{N_n\tau_n}\big\|W'(\phi_{i\tau_n}(t))+W'(1-\phi_{1\tau_n}(t)-\phi_{2\tau_n}(t))\big\|_{L^2(\T)}^2\,\ud{t} \non\\
&\qquad+16\sum_{j=1}^2|\gamma_{ij}|\int_{\tau_n}^{N_n\tau_n}\big\|(-\Delta)^{-1}\big(f(\phi_{j\tau_n}(t))-\omega_j\big)f'(\phi_{i\tau_n}(t)) \big\|_{L^2(\T)}^2\,\ud{t}\non\\
&\qquad+16\int_{\tau_n}^{N_n\tau_n}\big\|\lambda_{i\tau_n}f'(\phi_{i\tau_n}(t))\big\|_{L^2(\T)}^2\,\ud{t} \non\\
\leq & \  C(T, \phi_{10}, \phi_{20}, \omega_1, \omega_2, \beta, \gamma_{i1}, \gamma_{i2}).
\end{align}
Therefore, using Young's inequality and \eqref{Laplace-bound-1}, we have
\begin{align*}
&\int_{\tau_n}^{N_n\tau_n}\big\|\Delta\phi_{1\tau_n}(t)\big\|_{L^2(\T)}^2\,\ud{t} \\
= &\ \dfrac{1}{9}\int_{\tau_n}^{N_n\tau_n}\big\| 2 \big[ 2\Delta\phi_{1\tau_n}(t) + \Delta\phi_{2\tau_n}(t) \big] - \big[ 2\Delta\phi_{2\tau_n}(t) + \Delta\phi_{1\tau_n}(t) \big] \big\|_{L^2(\T)}^2\,\ud{t} \\
\leq &\ \frac{8}{9}\int_{\tau_n}^{N_n\tau_n}\big\| 2\Delta\phi_{1\tau_n}(t)+\Delta\phi_{2\tau_n}(t) \big\|_{L^2(\T)}^2\,\ud{t}
+\frac{2}{9}\int_{\tau_n}^{N_n\tau_n}\big\| 2\Delta\phi_{2\tau_n}(t)+\Delta\phi_{1\tau_n}(t)  \big\|_{L^2(\T)}^2\,\ud{t}\\
\leq &\ C(T, \phi_{10}, \phi_{20}, \omega_1, \omega_2, \beta, \gamma_{11}, \gamma_{12}, \gamma_{22}).
\end{align*}
and the estimate for $\int_{\tau_n}^{N_n\tau_n}\big\|\Delta\phi_{2\tau_n}(t)\big\|_{L^2(\T)}^2\,\ud{t}$ can be established in a similar manner, which together with \eqref{uniform-H1-bound}, and monotone convergence theorem leads to \eqref{H2-integrability}.
\end{proof}

%%%%%%%%%%%%%%%%%%%%%%%%%%%%%%%%%%%%%%%%%%%%%%%%%%%%%%%%%%%%%%%%%%%%%%%%%%%%%%%%%%%%%%%%%%%%%%%%%%%%%%%%%%%%%%%%%%%%%%%%%%%%%%%%%%%%%%%%%%%%%%%%%%

\subsection{Convergence to the limit curve}\label{sec:ConvToLimitCurve}

After collecting all the {\it{a priori}} estimates and the Euler-Lagrange equation \eqref{equation-discrete-EL} established in the previous subsections, in this subsection we shall show that the limit curve $(\phi_1, \phi_2)$ retrieved in Proposition \ref{proposition-convergence} indeed solves the equation \eqref{GF}.

To begin with, for $\forall\,0<t< T$, $\forall\, n\geq N$, denote $\tilde{N}_n = \lfloor t/\tau_n \rfloor$. Summing $k$ from $0$ to $\tilde{N}_n - 1$ in \eqref{equation-discrete-EL}, using \eqref{def-discrete-sequence}, \eqref{def-piecewise-interpolation} and \eqref{def-piecewise-interpolation-2}, it is easy to check that
\begin{align}\label{equation-discrete-level}
&\int_{\T}\big[\phi_{i\tau_n}(t,x)-\phi_{i0}(x)\big]v_i(x)\,\ud{x}\non\\
=&\int_{\tau_n}^{\tilde{N}_n\tau_n+\tau_n}\int_{\T}\frac12\Big[\Delta\phi_{1\tau_n}(s,x)+\Delta\phi_{2\tau_n}(s,x)
+\Delta\phi_{i\tau_n}(s,x) - W'(\phi_{i\tau_n}(s,x))\Big]v_i(x)\,\ud{x}\ud{s}\non\\
&\qquad+\int_{\tau_n}^{\tilde{N}_n\tau_n+\tau_n}\int_{\T}\Big[\frac12 W'(1-\phi_{1\tau_n}-\phi_{2\tau_n})-\sum_{j=1}^2\gamma_{ij}(-\Delta)^{-1}(f(\phi_{j\tau_n}(s,x))-\omega_j)f'(\phi_{i\tau_n}(s,x))\Big]v_i(x)\,\ud{x}\ud{s}\non\\
&\qquad-\int_{\tau_n}^{\tilde{N}_n\tau_n+\tau_n}\int_{\T}\Big[\lambda_{i\tau_n}(s)f'(\phi_{i\tau_n}(s,x))\Big]v_i(x)\,\ud{x}\ud{s},
\qquad\qquad i=1,2.
\end{align}
By equation \eqref{equation-discrete-level}, Proposition \ref{proposition-convergence}, Lemmas \ref{estimates-discrete}-\ref{lemma-H2-integrability}, we are ready to prove the main theorem regarding to the existence result.

\begin{definition}
We call $(\phi_1(t, x), \phi_2(t, x))$ a global weak solution to problem \eqref{GF}-\eqref{eqn:Volume}, if for $\forall T>0$, $(\phi_1, \phi_2)$ satisfies
\begin{align*}
\phi_i&\in C([0, T]; L^p(\T))\cap L^\infty(0, T; H^1_{\omega_i})\cap
L^2(0, T;H^2(\T)),\quad i=1,2,
\end{align*}
the initial condition \eqref{IC}, and the volume constraint \eqref{eqn:Volume}, for $\forall\,t\in [0, T]$. Further, for $\forall t\in (0, T)$, any test functions $w_1, w_2\in L^2(\T)$, it holds
\begin{align}\label{def:weaksolution}
\dfrac{\ud}{\ud{t}}\int_{\T}\phi_i(t,x)w_i(x)\,\ud{x}\ = &\int_{\T}\bigg[\frac12\Delta\phi_1+\frac12\Delta\phi_2+\frac12\Delta\phi_i- \frac12 W'(\phi_i)+ \frac12 W'(1-\phi_1-\phi_2)\non\\
&\qquad\quad-\sum_{j=1}^2\gamma_{ij}(-\Delta)^{-1}\big(f(\phi_j)-\omega_j\big)f'(\phi_i)-\lambda_i{f}'(\phi_i)\bigg]w_i(x)\,\ud{x}
\end{align}
in the distributional sense in $(0, T)$ for $i=1,2$.
\end{definition}

\begin{theorem}\label{theorem-main}
For any $\omega_1, \omega_2\in\RR$ that satisfy \eqref{tech-assumption}, $(\phi_{10}, \phi_{20})\in H^1_{\omega_1}\times H^2_{\omega_2}$, there exists a unique global weak solution $(\phi_1, \phi_2)$ to problem \eqref{GF}-\eqref{IC} with volume constraint \eqref{eqn:Volume}. Further, the free energy $E(\phi_1(t), \phi_2(t))$ is decreasing as time evolves.
\end{theorem}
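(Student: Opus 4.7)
The plan is to take $n\to\infty$ in the discrete weak formulation \eqref{equation-discrete-level} to produce a weak solution in the sense of the definition above, then separately settle uniqueness and energy monotonicity. First, combining Lemma \ref{lemma-H2-integrability} with Proposition \ref{proposition-convergence}, along a further subsequence (not relabeled) I have $\phi_{i\tau_n}\rightharpoonup\phi_i$ weakly in $L^2(0,T;H^2(\T))$ and strongly in $C([0,T];L^p(\T))$ for every $p\in[2,6)$. An $n$-independent $L^2(0,T)$ bound on the Lagrange multipliers $\lambda_{i\tau_n}$ is already implicit in the proof of Lemma \ref{lemma-H2-integrability} via the decomposition $\lambda_{i\tau_n}=\tilde\lambda_{i\tau_n}+(\lambda_{i\tau_n}-\tilde\lambda_{i\tau_n})$ together with the strict lower bound $\|f'(\phi_{i\tau_n})\|_{L^2}^2\geq 7\beta/8$; hence along the same subsequence $\lambda_{i\tau_n}\rightharpoonup\lambda_i$ weakly in $L^2(0,T)$.

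Next, I pass to the limit in \eqref{equation-discrete-level} against an arbitrary $v_i\in L^2(\T)$ and a smooth temporal test function. The Laplacian terms pass by weak $L^2_{t,x}$ convergence. The polynomial terms $W'(\phi_{i\tau_n})$ and $W'(1-\phi_{1\tau_n}-\phi_{2\tau_n})$ converge strongly in $L^2_{t,x}$ because $W'$ is cubic and the sequence is uniformly bounded in every $L^p$ with $p<6$. For the nonlocal term, Lemma \ref{lemma-key-inequality} upgrades strong $L^{6/5}$ convergence of $f(\phi_{j\tau_n})-\omega_j$ to strong $L^6$ convergence of $(-\Delta)^{-1}(f(\phi_{j\tau_n})-\omega_j)$, so its product with $f'(\phi_{i\tau_n})$ converges strongly in $L^2_{t,x}$. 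For the multiplier term, weak $L^2(0,T)$ convergence of $\lambda_{i\tau_n}$ combined with strong $L^2(0,T;L^2(\T))$ convergence of $f'(\phi_{i\tau_n})$ yields weak $L^1$ convergence against the test functions, which is all that is needed. The outcome is exactly \eqref{def:weaksolution}; since every term on its RHS lies in $L^2_{t,x}$, one reads off $\partial_t\phi_i\in L^2(0,T;L^2(\T))$ and hence the continuity into $L^p$ claimed in the definition.

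To identify $\lambda_i$ with \eqref{lambda}, I test \eqref{def:weaksolution} at a.e.\ $t$ against $w_i=f'(\phi_i(t,\cdot))$, which is admissible since $\phi_i(t)\in H^1\hookrightarrow L^6$. Differentiating the constraint $\overline{f(\phi_i(t))}\equiv\omega_i$ in time gives $\int_\T f'(\phi_i)\,\partial_t\phi_i\,\ud x=0$, and solving the resulting scalar identity for $\lambda_i(t)$ yields \eqref{lambda}, with denominator strictly positive by \eqref{positivity-1}. Energy monotonicity follows by passing to the limit in \eqref{energy-decreasing} using the weak lower semicontinuity of $E$ already established inside Lemma \ref{lemma-discrete-minimizer}. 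For uniqueness, let $(\phi_1^a,\phi_2^a)$ and $(\phi_1^b,\phi_2^b)$ share initial data, set $u_i=\phi_i^a-\phi_i^b$, and test the difference of the two PDEs with $u_i$, summed over $i=1,2$: the Laplacian contributions give $-\|\nabla u_i\|_{L^2}^2$ dissipation, the polynomial differences in $W'$, $f$, $f'$ are locally Lipschitz on the $H^1$-ball fixed by the uniform energy bound, the nonlocal operator is bounded by Lemma \ref{lemma-key-inequality}, and the difference $\lambda_i^a-\lambda_i^b$ is controlled through the explicit formula \eqref{lambda} together with \eqref{positivity-1}. Grönwall's inequality then forces $u_i\equiv 0$.

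The main obstacle is the Lagrange multiplier throughout the argument: obtaining the $n$-independent $L^2(0,T)$ bound on $\lambda_{i\tau_n}$ relies on the approximation scheme $w_{i\tau_n}^{k+1}=f'(\xi_i(\tau_n k,\cdot))$ from Proposition \ref{proposition-positivity-2} (as highlighted in Remark \ref{remark-approximation}), while identifying the weak limit with \eqref{lambda} and controlling $\lambda_i^a-\lambda_i^b$ in the uniqueness step both hinge on the strict positivity \eqref{positivity-1} that the technical assumption \eqref{tech-assumption} underwrites.
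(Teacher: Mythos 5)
Your proposal follows essentially the same route as the paper's proof of existence and uniqueness: the same compactness and $n$-independent $L^2(0,T)$ multiplier bounds, the same term-by-term passage to the limit in \eqref{equation-discrete-level}, the same identification of $\lambda_i$ with \eqref{lambda} by testing against $f'(\phi_i)$ and differentiating the constraint, and the same Gronwall argument for uniqueness built on the explicit multiplier formula and the lower bound \eqref{positivity-1}. The only (harmless) divergence is that you establish strong $L^2$ convergence of the product $(-\Delta)^{-1}\big(f(\phi_{j\tau_n})-\omega_j\big)f'(\phi_{i\tau_n})$ directly, so you can test against $v_i\in L^2(\T)$ from the start, whereas the paper tests against $v_i\in H^1(\T)$ (to control $\|v_i\|_{L^3}$ in its splitting) and then invokes density.

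One step does not work as written: deducing $E(\phi_1(t),\phi_2(t))\le E(\phi_1(s),\phi_2(s))$ for arbitrary $0\le s<t$ from \eqref{energy-decreasing} by weak lower semicontinuity alone. Lower semicontinuity gives
$E(\phi_1(t),\phi_2(t))\le\liminf_n E\big(\phi_{1\tau_n}(t),\phi_{2\tau_n}(t)\big)\le\liminf_n E\big(\phi_{1\tau_n}(s),\phi_{2\tau_n}(s)\big)$,
but to close the chain you would need $\liminf_n E\big(\phi_{1\tau_n}(s),\phi_{2\tau_n}(s)\big)\le E(\phi_1(s),\phi_2(s))$, and lower semicontinuity at the earlier time gives exactly the opposite inequality; the Dirichlet part of the energy may drop in the weak $H^1$ limit at time $s$. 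As stated, your argument only yields the comparison with the initial energy $E(\phi_{10},\phi_{20})$. The repair is standard given the regularity you have already established ($\partial_t\phi_i\in L^2(0,T;L^2(\T))$ and $\phi_i\in L^2(0,T;H^2(\T))$): test the equation with $\partial_t\phi_i$, observe that the multiplier terms vanish because $\int_{\T}f'(\phi_i)\,\partial_t\phi_i\,\ud{x}=0$ by the volume constraint, and integrate in time to obtain
\begin{equation*}
E(\phi_1(t),\phi_2(t))-E(\phi_1(s),\phi_2(s))=-\sum_{i=1}^2\int_s^t\|\partial_t\phi_i(\sigma,\cdot)\|_{L^2(\T)}^2\,\ud{\sigma}\le 0 .
\end{equation*}
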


\begin{proof}

\noindent\textbf{Existence:} To begin with, using \eqref{uniform-H1-bound}, \eqref{H2-integrability}, we can further get up to a subsequence (for simplicity we shall not distinguish the sequence $\{\tau_n\}$ and its subsequence now and later) that
\begin{align}
&\phi_{i\tau_n}(t) \overset{\ast}{\rightharpoonup} \phi_i(t), \quad\mbox{weak} \ast \mbox{ in
} L^\infty(0, T; H^1(\T)), \non\\
&\phi_{i\tau_n}(t)\rightharpoonup\phi_i(t), \quad\mbox{weakly} \mbox{ in
} L^2(0, T; H^2(\T)),
\label{weak-star-2}
\end{align}

From now on in the proof, we will take the test functions $v_i(x) \in H^1(\T)$ with better regularity than $w_i(x) \in L^2(\T)$ in (\ref{def:weaksolution}), then by a classical density argument, it is easy to check that \eqref{GF-integral-form} below is valid for any test function $w_i\in L^2(\T)$.

As a consequence, passing $n\rightarrow+\infty$ in \eqref{equation-discrete-level} we get by \eqref{L2-strong-convergence} that
\begin{equation}\label{convergence-temporal}
\int_{\T}\big[\phi_{i\tau_n}(t,x)-\phi_{i0}(x)\big]v_i(x)\,\ud{x}\rightarrow\int_{\T}\big[\phi_i(t,x)-\phi_{i0}(x)\big]v_i(x)\,\ud{x}.
\end{equation}
Meanwhile, using \eqref{uniform-H1-bound},
\eqref{L2-strong-convergence} and dominated convergence theorem we get
\begin{align}
\int_{\tau_n}^{\tilde{N}_n\tau_n+\tau_n}\int_{\T}W'(\phi_{i\tau_n}(s,x))v_i(x)\,\ud{x}\ud{s}
&\rightarrow\int_0^t\int_{\T}W'(\phi_i(s,x))v_i(x)\,\ud{x}\ud{s}, \label{convergence-space-2}\\
\int_{\tau_n}^{\tilde{N}_n\tau_n+\tau_n}\int_{\T}W'(1-\phi_{1\tau_n}-\phi_{2\tau_n})v_i(x)\,\ud{x}\ud{s}
&\rightarrow\int_0^t\int_{\T}W'(1-\phi_1-\phi_2)v_i(x)\,\ud{x}\ud{s} \label{convergence-space-2-extra}
\end{align}
It is worth pointing out that in \eqref{convergence-space-2} and \eqref{convergence-space-2-extra} the
bordering time integral can be ignored because the above bordering
time integrand is bounded in $L^\infty(0, T; L^1(\T))$.

\smallskip

To proceed, note that \eqref{H2-integrability} and \eqref{weak-star-2} together imply that
%$$
%  \|\Delta\phi\|_{L^2(\delta, T; L^2(\T))}\leq\displaystyle\liminf_{n\rightarrow\infty}\|\Delta\phi_{\tau_n}\|_{L^2(\delta, T; L^2(\T))}\leq C(T, \phi_{10}, \phi_{20}, \omega_1, \omega_2, \beta, \gamma_{11}, \gamma_{12}, \gamma_{22}),
%$$
%uniformly in $\delta\in(0,T)$. Thus
\begin{equation}\label{laplace-term-integrable}
\Delta\phi\in L^2(0, T; L^2(\T)),
\end{equation}
and henceforth we know from \eqref{weak-star-2} and
\eqref{laplace-term-integrable} that
\begin{align}\label{convergence-space-3}
\int_{\tau_n}^{\tilde{N}_n\tau_n+\tau_n}\int_{\T}\Delta\phi_{i\tau_n}(s,x)v_i(x)\,\ud{x}\ud{s}
\rightarrow\int_0^t\int_{\T}\Delta\phi_i(s,x)v_i(x)\,\ud{x}\ud{s}.
\end{align}
It is worth mentioning that bordering time integrals can be
neglected for the same reason as in \eqref{convergence-space-2}.

\smallskip

Next, using dominated convergence theorem, we derive from
\eqref{HLS-1}, \eqref{uniform-H1-bound} and
\eqref{Lp-strong-convergence} that
\begin{align*}
&\bigg|\int_0^t\int_{\T}(-\Delta)^{-1}(f(\phi_{j\tau_n})-\omega_j)f'(\phi_{i\tau_n})v_i(x) \ \ud{x}\ud{s} - \int_0^t\int_{\T}(-\Delta)^{-1}(f(\phi_j)-\omega_j)f'(\phi_i)v_i(x)  \ \ud{x}\ud{s} \bigg|\\
\leq & \int_0^t\int_{\T}\Big|(-\Delta)^{-1}(f(\phi_{j\tau_n})-f(\phi_j))f'(\phi_{i\tau_n})v_i(x)\Big|  + \Big|(-\Delta)^{-1}(f(\phi_j)-\omega_j)\big[f'(\phi_{i\tau_n})-f'(\phi_i)\big]v_i(x)\Big|\,\ud{x}\ud{s}\\
\leq & \int_0^t\left\|G\ast\big(f(\phi_{j\tau_n}(s))-f(\phi_j(s))\big)\right\|_{L^6(\T)}\big\|f'(\phi_{i\tau_n}(s))\big\|_{L^3(\T)}\|v_i\|_{L^2(\T)}\,\ud{s}\\
&\qquad+\int_0^t\left\|G\ast\big(f(\phi_j(s))-\omega_j\big)\right\|_{L^6(\T)}\big\|f'(\phi_{i\tau_n}(s))-f'(\phi_i(s))\big\|_{L^2(\T)}\|v_i\|_{L^3(\T)}\,\ud{s}\\
\leq & C\int_0^t\big\|f(\phi_{j\tau_n}(s))-f(\phi_j(s))\big\|_{L^{\frac65}(\T)}\,\ud{s} + C\int_0^t\big\|f(\phi_j(s))-\omega_j\big\|_{L^{\frac65}(\T)}\big\|f'(\phi_{i\tau_n}(s))-f'(\phi_i(s))\big\|_{L^2(\T)}\,\ud{s}\\
\rightarrow & 0,
\end{align*}
for $j=1,2$. Note that the $H^1$ regularity of the test function $v_i(x)$ is to bound $\|v_i\|_{L^3(\T)}$ in the above estimate. Hence we get (after ignoring borderline time integrands) that
\begin{align}\label{convergence-space-4}
&\sum_{j=1}^2\gamma_{ij}\int_{\tau_n}^{\tilde{N}_n\tau_n+\tau_n}\int_{\T}(-\Delta)^{-1}(f(\phi_{j\tau_n}(s,x))-\omega_j)f'(\phi_{i\tau_n}(s,x))v_i(x)\,\ud{x}\ud{s}\non\\
\rightarrow & \sum_{j=1}^2\gamma_{ij}\int_0^t\int_{\T}(-\Delta)^{-1}(f(\phi_j(s,x))-\omega_j)f'(\phi_i(s,x))v_i(x)\,\ud{x}\ud{s}.
\end{align}
Finally, let us consider the convergence of the last term on the RHS of \eqref{equation-discrete-level}. By \eqref{uniform-H1-bound}, \eqref{Lp-strong-convergence}, and the dominated convergence theorem we get
\begin{equation}\label{strong-converence}
\int_{\T}f'(\phi_{i\tau_n}(s,
x))v_i(x)\,\ud{x}\rightarrow\int_{\T}f'(\phi_i(s, x))v_i(x)\,\ud{x}
\;\;\mbox{strongly in } L^2(0, T).
\end{equation}
Moreover, in a similar manner as in the proof of Lemma \ref{lemma-H2-integrability} we obtain
\begin{align}
\int_{\tau_n}^{T-\tau_n}\big|\lambda_{i\tau_n}(s)\big|^2\,\ud{t}
&\leq\displaystyle\sum_{k=0}^{N_n-2}\int_{(k+1)\tau_n}^{(k+2)\tau_n}2\big(|\lambda_{i\tau_n}^k-\tilde{\lambda}_{i\tau_n}^k|^2
+|\tilde{\lambda}_{i\tau_n}^k|^2\big)\,\ud{t}\non\\
&\leq C(T,\phi_{10},\phi_{20},\omega_1,\omega_2,\beta,\gamma_{11},\gamma_{12},\gamma_{22})
+C(\beta)\displaystyle\sum_{k=0}^{N_n-2}\dfrac{\big\|\phi_{i\tau_n}^{k+1}-\phi_{i\tau_n}^{k}\big\|_{L^2(\T)}^2}{\tau_n}\non\\
&\leq C(T,\phi_{10},\phi_{20},\omega_1,\omega_2,\beta,\gamma_{11},\gamma_{12},\gamma_{22}),
\end{align}
which together with monotone convergence theorem implies
\begin{equation}\label{weak-convergence}
\lambda_{i\tau_n}(s)\rightarrow\lambda_i(s)\;\;\mbox{weakly in }
L^2(0, T), \;\;\mbox{where }\, \lambda_i(s)\in L^2(0, T)
\end{equation}

In all, \eqref{strong-converence}-\eqref{weak-convergence} give
\begin{align}\label{convergence-space-5}
\int_{\tau_n}^{\tilde{N}_n\tau_n+\tau_n}\int_{\T}\Big[\lambda_{i\tau_n}(s)f'(\phi_{i\tau_n}(s,x))\Big]v_i(x)\,\ud{x}\ud{s}
\rightarrow\int_0^t\int_{\T}\big[\lambda_i(s)f'(\phi_i(s,x))\big]v_i(x)\,\ud{x}\ud{s}.
\end{align}

\smallskip

In conclusion, summing up the convergence results in \eqref{convergence-temporal}, \eqref{convergence-space-2}, \eqref{convergence-space-2-extra}, \eqref{convergence-space-3}, \eqref{convergence-space-4}, \eqref{convergence-space-5}, we manage to establish $\forall\,0<t< T$ the equation
\begin{align}\label{GF-integral-form}
\int_{\T}\big[\phi_i(t,x)-\phi_{i0}(x)\big]v_i(x)\,\ud{x} = & \int_0^t\int_{\T}\bigg[\frac12\Delta\phi_1+\frac12\Delta\phi_2+\frac12\Delta\phi_i- \frac12 W'(\phi_i)+\frac12 W'(1-\phi_1-\phi_2)\non\\
&-\sum_{j=1}^2\gamma_{ij}(-\Delta)^{-1}\big(f(\phi_j)-\omega_j\big)f'(\phi_i)-\lambda_i{f}'(\phi_i)\bigg]v_i(x)\,\ud{x}
\ud{s},
\end{align}
Hence we obtain a weak solution to the problem \eqref{GF}-\eqref{IC} in its integral form, which is equivalent to \eqref{GF} by \cite[Lemma 1.1, Chapter 3]{Temam_Book2001}. Further, to establish \eqref{lambda}, we multiply both sides of \eqref{GF} with $f'(\phi_i)$ and then integrate over $\T$. Note that \eqref{eqn:Volume} and \eqref{positivity-1} can be utilized.

\bigskip

\noindent\textbf{Uniqueness:} Suppose there are two global weak solutions, namely $(\phi_1,\phi_2)$, $(\phi_1^{\ast},\phi_2^{\ast})$ to problem \eqref{GF}-\eqref{IC}.
First of all, we know that
\begin{align*}
&\phi_i, \phi_i^\ast \in L^\infty(0, T; H^1(\T))\cap L^2(0, T; H^2(\T)), \quad 1\leq i\leq 2,\\
&\int_{\T}|f'(\phi_i)(t,x)|^2\,\ud{x}\geq\beta_i>0, \quad \int_{\T}|f'(\phi_i^\ast)(t,x)|^2\,\ud{x}=\beta_i^\ast>0, \quad\forall t\in [0, T].
\end{align*}

Let us define $\tilde{\phi_i}=\phi_i-\phi_i^\ast$, $i=1, 2$, then $(\tilde{\phi}_1,\tilde{\phi}_2)$ satisfies
\begin{align}\label{equation-difference}
\partial_t\tilde{\phi_i}&=\Delta\tilde{\phi}_i+\frac{\Delta\tilde{\phi}_j}{2}-\frac12\Big(\frac{\partial{W}_T}{\partial\phi_i}-\frac{\partial{W}_T}{\partial\phi_i^\ast}\Big)
-\sum_{k=1}^2\gamma_{ik}\big[(-\Delta)^{-1}(f(\phi_k)-\omega_k)f'(\phi_i)-(-\Delta)^{-1}(f(\phi_k^\ast)-\omega_k)f'(\phi_i^\ast)\big]\non\\
&\qquad-\lambda_i(t)f'(\phi_i)+\lambda_i^{\ast}f'(\phi_i^\ast), \qquad 1\leq i, j\leq 2,\; i\neq j.
\end{align}
subject to periodic boundary condition and the initial conditions
\begin{equation}\label{IC-difference-equ}
\tilde{\phi}_i(0, x)=0, \qquad i=1, 2.
\end{equation}
Multiplying equation \eqref{equation-difference} with $2\tilde{\phi}_i$, and summing over $i$ from $1$ to $2$, after integrating over $\T$, we get
\begin{align}
&\frac{\ud}{\ud{t}}\int_{\T}\big(|\tilde{\phi}_1(t,x)|^2+|\tilde{\phi}_2(t,x)|^2\big)\,\ud{x} \non\\
&=-2\int_{\T}\big(|\nabla\tilde{\phi}_1(t,x)|^2+|\nabla\tilde{\phi}_2(t,x)|^2\big)\,\ud{x}-2\int_{\T}\nabla\tilde{\phi}_1\cdot\nabla\tilde{\phi}_2(t,x)\,\ud{x}
-\sum_{i=1}^2\int_{\T}\Big[\frac{\partial{W}_T}{\partial\phi_i}-\frac{\partial{W_T}}{\partial\phi_i^\ast}\Big]\tilde{\phi}_i(t,x)\,\ud{x}\non\\
&\qquad-2\sum_{i,k=1}^2\int_{\T}\gamma_{ik}(-\Delta)^{-1}\big(f(\phi_k)-f(\phi_k^\ast)\big)f'(\phi_i)\tilde{\phi}_i(t,x)\,\ud{x}\non\\
&\qquad-2\sum_{i,k=1}^2\int_{\T}\gamma_{ik}(-\Delta)^{-1}\big(f(\phi_k^\ast)-\omega_k\big)\big[f'(\phi_i)-f'(\phi_i^\ast)\big]\tilde{\phi}_i(t,x)\,\ud{x}\non\\
&\qquad-\sum_{i=1}^2\lambda_i(t)\int_{\T}\big[f'(\phi_i)-f'(\phi_i^\ast)\big]\tilde{\phi}_i(t,x)\,\ud{x}
-\sum_{i=1}^2\big[\lambda_i(t)-\lambda_i^\ast(t)\big]\int_{\T}f'(\phi_i^\ast)\tilde{\phi}_i(t,x)\,\ud{x}  \non\\
&\defeq-2\big\|\nabla\tilde{\phi}_1(t,\cdot)\big\|_{L^2}^2-2\big\|\nabla\tilde{\phi}_2(t,\cdot)\big\|_{L^2}^2+I_1+\cdots+I_6.
\end{align}
We shall estimate $I_1, \cdots, I_6$ individually. First, it is easy to check
\begin{align}\label{I-1}
I_1\leq\big\|\nabla\tilde{\phi}_1(t,\cdot)\big\|_{L^2}^2+\big\|\nabla\tilde{\phi}_2(t,\cdot)\big\|_{L^2}^2.
\end{align}
Next, using mean value theorem, interpolation inequality and Young's inequality we know that
\begin{align}\label{I-2}
I_2&\leq C\big\|\phi_1^2+\phi_2^2+{\phi_1^{\ast}}^2+{\phi_2^{\ast}}^2+1\big\|_{L^2}\big(\|\tilde{\phi}_1\|_{L^4}^2+\|\tilde{\phi}_2\|_{L^4}^2\big)\non\\
&\leq C\big(\|\tilde{\phi}_1(t,\cdot)\|_{L^2}^2+\|\tilde{\phi}_2(t,\cdot)\|_{L^2}^2\big)
+\frac{1}{4}\big(\|\nabla\tilde{\phi}_1(t,\cdot)\|_{L^2}^2+\|\nabla\tilde{\phi}_2(t,\cdot)\|_{L^2}^2\big)
\end{align}
To proceed further, by mean value theorem and \eqref{HLS-1} we have
\begin{align}\label{I-3}
I_3&\leq 2\sum_{i,k=1}^2|\gamma_{ik}|\big\|(-\Delta)^{-1}(f(\phi_k)-f(\phi_k^\ast))\big\|_{L^6(\T)}\|f'(\phi_i)\|_{L^3(\T)}\|\tilde{\phi}_i\|_{L^2(\T)}\non\\
&\leq \sum_{i,k=1}^2C\big\|(f(\phi_k)-f(\phi_k^\ast))\big\|_{L^{\frac65}(\T)}\|\tilde{\phi}_i\|_{L^2(\T)}\non\\
&\leq \sum_{i,k=1}^2C\|f'(\eta_k)\|_{L^3}\|\tilde{\phi}_i\|_{L^2}^2 \qquad\mbox{where } \eta_k\in (\phi_k, \phi_k^\ast) \non\\
&\leq C\big(\|\tilde{\phi}_1(t,\cdot)\|_{L^2}^2+\|\tilde{\phi}_2(t,\cdot)\|_{L^2}^2\big).
\end{align}
At the same time, by mean value theorem, \eqref{HLS-1} and Young's inequality, one can show that
\begin{align}\label{I-4}
I_4&\leq 2\sum_{i,k=1}^2|\gamma_{ik}|\big\|(-\Delta)^{-1}(f(\phi_k^\ast)-\omega_k)\big\|_{L^6}\|f'(\phi_i)-f'(\phi_i^\ast)\|_{L^2}\|\tilde{\phi}_i\|_{L^3}\non\\
&\leq\sum_{i,k=1}^2C\|f(\phi_k^\ast)-\omega_k\big\|_{L^{\frac65}}\|f''(\eta_i)\|_{L^6}\|\tilde{\phi}_i\|_{L^3}\|\tilde{\phi}_i\|_{L^3}
 \qquad\mbox{where } \eta_i\in (\phi_i, \phi_i^\ast)\non\\
&\leq\sum_{i=1}^2C\Big(\|\tilde{\phi}_i\|_{L^2}^{\frac12}\|\nabla\tilde{\phi}_i\|_{L^2}^{\frac12}+\|\tilde{\phi}_i\|_{L^2}\Big)^2\non\\
&\leq C\big(\|\tilde{\phi}_1(t,\cdot)\|_{L^2}^2+\|\tilde{\phi}_2(t,\cdot)\|_{L^2}^2\big)
+\frac{1}{4}\big(\|\nabla\tilde{\phi}_1(t,\cdot)\|_{L^2}^2+\|\nabla\tilde{\phi}_2(t,\cdot)\|_{L^2}^2\big)
\end{align}
We proceed to estimate $I_5$ as follows
\begin{align}\label{I-5}
I_5&\leq\sum_{i=1}^2|\lambda_i(t)|\big\|f'(\phi_i)-f'(\phi_i^\ast)\big\|_{L^2}\|\tilde{\phi}_i\|_{L^2}\non\\
&\leq\sum_{i=1}^2|\lambda_i(t)|\big\|f''(\eta_i)\big\|_{L^6}\|\tilde{\phi}_i\|_{L^3}\|\tilde{\phi}_i\|_{L^2}\qquad\mbox{where } \eta_i\in (\phi_i, \phi_i^\ast)\non\\
&\leq\sum_{i=1}^2C|\lambda_i(t)|\|\tilde{\phi}_i\|_{L^2}\Big(\|\tilde{\phi}_i\|_{L^2}^{\frac12}\|\nabla\tilde{\phi}_i\|_{L^2}^{\frac12}+\|\tilde{\phi}_i\|_{L^2}\Big)\non\\
&\leq C(1+|\lambda_1(t)|^2+|\lambda_2(t)|^2)\left(\|\tilde{\phi}_1(t,\cdot)\|_{L^2}^2+\|\tilde{\phi}_2(t,\cdot)\|_{L^2}^2\right)\non\\
&\qquad+\frac{1}{4}\left(\|\nabla\tilde{\phi}_1(t,\cdot)\|_{L^2}^2+\|\nabla\tilde{\phi}_2(t,\cdot)\|_{L^2}^2\right)
\end{align}
Finally, to deal with $I_6$, first we estimate $|\lambda_i(t)-\lambda_i^\ast(t)|$ for $i=1,2$. By \eqref{lambda}, we see that
\begin{align*}
&\lambda_1(t)-\lambda_1^\ast(t)\\
=&\frac{-\int_{\T}|f'(\phi_1^\ast)|^2\ud{x}\int_{\T}|\nabla\phi_1|^2f''(\phi_1)\,\ud{x}
+\int_{\T}|f'(\phi_1)|^2\ud{x}\int_{\T}|\nabla\phi_1^\ast|^2f''(\phi_1^\ast)\,\ud{x}}{\int_{\T}|f'(\phi_1)|^2\ud{x}\int_{\T}|f'(\phi_1^\ast)|^2\ud{x}}\\
&\qquad-\frac{\int_{\T}|f'(\phi_1^\ast)|^2\ud{x}\int_{\T}(\nabla\phi_1\cdot\nabla\phi_2)f''(\phi_1)\,\ud{x}
-\int_{\T}|f'(\phi_1)|^2\ud{x}\int_{\T}(\nabla\phi_1^\ast\cdot\nabla\phi_2^\ast)f''(\phi_1^\ast)\,\ud{x}}{2\int_{\T}|f'(\phi_1)|^2\ud{x}\int_{\T}|f'(\phi_1^\ast)|^2\ud{x}}\\
&\qquad-\frac{\int_{\T}|f'(\phi_1^\ast)|^2\ud{x}\int_{\T}W'(\phi_1)f'(\phi_1)\,\ud{x}-\int_{\T}|f'(\phi_1)|^2\ud{x}
\int_{\T}W'(\phi_1^\ast)f'(\phi_1^\ast)\,\ud{x}}{2\int_{\T}|f'(\phi_1)|^2\ud{x}\int_{\T}|f'(\phi_1^\ast)|^2\ud{x}}\\
&\qquad+\frac{\int_{\T}|f'(\phi_1^\ast)|^2\ud{x}\int_{\T}W'(1-\phi_1-\phi_2)f'(\phi_1)\,\ud{x}-\int_{\T}|f'(\phi_1)|^2\ud{x}\int_{\T}W'(1-\phi_1^\ast-\phi_2^\ast)
f'(\phi_1^\ast)\,\ud{x}}{2\int_{\T}|f'(\phi_1)|^2\ud{x}\int_{\T}|f'(\phi_1^\ast)|^2\ud{x}}\\
&\qquad-\sum_{k=1}^2\frac{\int_{\T}|f'(\phi_1^\ast)|^2\ud{x}\int_{\T}\gamma_{1k}(-\Delta)^{-1}(f(\phi_k)-\omega_k)f'(\phi_1)
\,\ud{x}}{\int_{\T}|f'(\phi_1)|^2\ud{x}\int_{\T}|f'(\phi_1^\ast)|^2\ud{x}}\\
&\qquad+\sum_{k=1}^2\frac{\int_{\T}|f'(\phi_1)|^2\ud{x}\int_{\T}\gamma_{1k}(-\Delta)^{-1}(f(\phi_k^\ast)-\omega_k^\ast)f'(\phi_1^\ast)
\,\ud{x}}{\int_{\T}|f'(\phi_1)|^2\ud{x}\int_{\T}|f'(\phi_1^\ast)|^2\ud{x}}\\
\defeq & J_1+\cdots+J_6.
\end{align*}
Note that
\begin{align*}
J_1&=\frac{\int_{\T}(|f'(\phi_1)|^2-|f'(\phi_1^\ast)|^2)\,\ud{x}\int_{\T}|\nabla\phi_1|^2f''(\phi_1)\,\ud{x}}{\int_{\T}|f'(\phi_1)|^2\ud{x}\int_{\T}|f'(\phi_1^\ast)|^2\ud{x}}
\\
&\qquad+\frac{\int_{\T}|f'(\phi_1^\ast)|^2\,\ud{x}\int_{\T}\left( |\nabla\phi_1|^2f''(\phi_1)-|\nabla\phi_1^\ast|^2f''(\phi_1^\ast)\right)\,\ud{x}}{\int_{\T}|f'(\phi_1)|^2\ud{x}\int_{\T}|f'(\phi_1^\ast)|^2\ud{x}}\\
&\defeq J_{1a}+J_{1b},
\end{align*}
where using interpolation inequality we get
\begin{align*}
|J_{1a}|&\leq\frac{1}{\beta_1\beta_1^{\ast}}\int_{\T}\big|f'(\phi_1)-f'(\phi_1^\ast)\big|\big|f'(\phi_1)+f'(\phi_1^\ast)\big|\,\ud{x}\int_{\T}|f''(\phi_1)||\nabla\phi_1|^2\,\ud{x}\\
&\leq\frac{1}{\beta_1\beta_1^{\ast}}\int_{\T}|\tilde{\phi}_1||f''(\eta_1)|\big|f'(\phi_1)+f'(\phi_1^\ast)\big|\,\ud{x}\int_{\T}|f''(\phi_1)||\nabla\phi_1|^2\,\ud{x}
\qquad\mbox{where } \eta_1\in (\phi_1, \phi_1^\ast)\\
&\leq C\|\tilde{\phi}_1\|_{L^2(\T)}\|f''(\phi_1)\|_{L^6(\T)}\|\nabla\phi_1\|_{L^2(\T)}\|\nabla\phi_1\|_{L^3(T)}\\
&\leq C\|\tilde{\phi}_1\|_{L^2(\T)}\big(\|\nabla\phi_1\|_{L^2(\T)}^{\frac12}\|\Delta\phi_1\|_{L^2(\T)}^{\frac12}+\|\nabla\phi_1\|_{L^2(\T)}\big)\\
&\leq C\big(1+\|\Delta\phi_1\|_{L^2(\T)}^{\frac12}\big)\|\tilde{\phi}_1\|_{L^2(\T)},
\end{align*}
and
\begin{align*}
|J_{1b}|&\leq\frac{1}{\beta_1}\left|\int_{\T}\big(|\nabla\phi_1|^2-|\nabla\phi_1^\ast|^2\big)f''(\phi_1)\,\ud{x}
+\int_{\T}|\nabla\phi_1^\ast|^2\Big(f''(\phi_1)-f''(\phi_1^\ast)\Big)\,\ud{x}\right| \\
&\leq C\int_{\T}|\nabla\tilde{\phi}_1|\big(|\nabla\phi_1|+|\nabla\phi_1^\ast|\big)|f''(\phi_1)|\,\ud{x}+C\int_{\T}|\tilde{\phi}_1||\nabla\phi_1^\ast|^2\,\ud{x}\\
&\leq C\|f''(\phi_1)\|_{L^6}\big(\|\nabla\phi_1\|_{L^3}+\|\nabla\phi_1^\ast\|_{L^3}\big)\|\nabla\tilde{\phi}_1\|_{L^2}
+\|\tilde{\phi}_1\|_{L^2}\|\nabla\phi_1^\ast\|_{L^4}^2\\
&\leq C\big(1+\|\Delta\phi_1\|_{L^2}^{\frac12}+\|\Delta\phi_1^\ast\|_{L^2}^{\frac12}\big)\|\nabla\tilde{\phi}_1\|_{L^2}
+C(1+\|\Delta\phi_1^\ast\|_{L^2})\|\tilde{\phi}_1\|_{L^2}.
\end{align*}
In all, we have
\begin{align*}
|J_1|\leq C\Big(1+\|\Delta\phi_1\|_{L^2}^{\frac12}+\|\Delta\phi_1^\ast\|_{L^2}^{\frac12}\Big)\|\nabla\tilde{\phi}_1\|_{L^2}
+C\Big(1+\|\Delta\phi_1\|_{L^2}+\|\Delta\phi_1^\ast\|_{L^2}\Big)\|\tilde{\phi}_1\|_{L^2}.
\end{align*}
Similarly to the estimate of $J_1$, we get
\begin{align*}
|J_2| \leq C\big(1+\|\Delta\phi_1^\ast\|_{L^2}^{\frac12}+\|\Delta\phi_2\|_{L^2}^{\frac12}\big)\big(\|\nabla\tilde{\phi}_1\|_{L^2}+\|\nabla\tilde{\phi}_2\|_{L^2}\big) + C(1+\|\Delta\phi_1\|_{L^2}+\|\Delta\phi_2\|_{L^2})\|\tilde{\phi}_1\|_{L^2}.
\end{align*}
Besides,
\begin{align*}
|J_3|&\leq\frac{1}{\beta_1\beta_1^{\ast}}\left|\int_{\T} \Big( f'(\phi_1^\ast)|^2-|f'(\phi_1)|^2 \Big) \,\ud{x}\int_{\T}W'(\phi_1)f'(\phi_1)\,\ud{x}\right|\\
&\qquad+\frac{1}{\beta_1^{\ast}}\left| \int_{\T}\Big( W'(\phi_1)f'(\phi_1)-W'(\phi_1^\ast)f'(\phi_1^\ast) \Big)\,\ud{x} \right|\\
&\leq C\left| \int_{\T}\Big( f'(\phi_1^\ast)-f'(\phi_1) \Big) \Big( f'(\phi_1^\ast)+f'(\phi_1) \Big)\,\ud{x} \right|\\
&\qquad+C \left| \int_{\T}\Big( W'(\phi_1)-W'(\phi_1^\ast) \Big) f'(\phi_1)+W'(\phi_1^\ast) \Big( f'(\phi_1)-f'(\phi_1^\ast) \Big)\,\ud{x}\right| \\
&\leq C\|\tilde{\phi}_1\|_{L^2}\|f''(\eta_1)\|_{L^6}\big\|f'(\phi_1^\ast)+f'(\phi_1)\big\|_{L^3}
+C\|\tilde{\phi}_1\|_{L^2}\|W''(\eta_1)\|_{L^3}\|f'(\phi_1)\big\|_{L^6}\\
&\qquad+C\|\tilde{\phi}_1\|_{L^2}\|W'(\phi_1^\ast)\|_{L^3}\|f''(\eta_1)\big\|_{L^6}
\qquad\mbox{where }\, \eta_1\in (\phi_1, \phi_1^\ast)\\
&\leq C\|\tilde{\phi}_1\|_{L^2}(1+\|\phi_1\|_{L^\infty}+\|\phi_1^\ast\|_{L^\infty})\\
&\leq C\big(1+\|\Delta\phi_1\|_{L^2}^{\frac12}+\|\Delta\phi_1^\ast\|_{L^2}^{\frac12}\big)\|\tilde{\phi}_1\|_{L^2}.
\end{align*}
Similar to $J_3$, we have
\begin{align*}
|J_4|\leq C\big(1+\|\Delta\phi_1\|_{L^2}^{\frac12}+\|\Delta\phi_1^\ast\|_{L^2}^{\frac12}+\|\Delta\phi_2^\ast\|_{L^2}^{\frac12}\big)
\big(\|\tilde{\phi}_1\|_{L^2}+\|\tilde{\phi}_2\|_{L^2}\big).
\end{align*}
Meanwhile, it is easy to check from \eqref{HLS-1} that
\begin{align*}
|J_5+J_6|&\leq\frac{1}{\beta_1\beta_1^{\ast}}\int_{\T}\big|f'(\phi_1)-f'(\phi_1^\ast)\big|\big|f'(\phi_1)+f'(\phi_1^\ast)\big|\,\ud{x}
\sum_{k=1}^2\int_{\T}|\gamma_{1k}\big|(-\Delta)^{-1}(f(\phi_k)-\omega_k)\big||f'(\phi_1)|\,\ud{x}\\
&\qquad+\frac{1}{\beta_1^\ast}\sum_{k=1}^2\int_{\T}\big|\gamma_{1k}\big|\big|(-\Delta)^{-1}(f(\phi_k)-f(\phi_k^\ast))\big|\big|f'(\phi_1)\big|\,\ud{x}\\
&\qquad+\frac{1}{\beta_1^\ast}\sum_{k=1}^2\int_{\T}\big|\gamma_{1k}\big|\big|(-\Delta)^{-1}(f(\phi_k^\ast)-\omega_k)\big|\big|f'(\phi_1)-f'(\phi_1^\ast)\big|\,\ud{x}\\
&\leq C\|\tilde{\phi}_1\|_{L^2}+C\sum_{k=1}^2\big\|f(\phi_k)-f(\phi_k^\ast)\big\|_{L^{\frac65}}\|f'(\phi_1)\|_{L^{\frac65}}+C\|\tilde{\phi}_1\|_{L^2}\\
&\leq C\|\tilde{\phi}_1\|_{L^2}+C\sum_{k=1}^2\|f'(\eta_k)\|_{L^3}\|\tilde{\phi}_k\|_{L^2}    \qquad\mbox{where }\, \eta_k\in (\phi_k, \phi_k^\ast),
\;1\leq k\leq 2\\
&\leq C\|\tilde{\phi}_1\|_{L^2}+C\|\tilde{\phi}_2\|_{L^2}.
\end{align*}
Summing up all the above estimates from $J_1$ to $J_6$, after using Young's inequality we conclude that
\begin{align}\label{estimate-difference-lagrange}
|\lambda_1(t)-\lambda_1^\ast(t)|
&\leq C\Big[1+\sum_{k=1}^2(\|\Delta\phi_k\|_{L^2}+\|\Delta\phi_k^\ast\|_{L^2})\Big]\sum_{k=1}^2\big(\|\tilde{\phi}_k\|_{L^2}+\|\nabla\tilde{\phi}_k^\ast\|_{L^2}\big),
\end{align}
while the estimate for $|\lambda_2(t)-\lambda_2^\ast(t)|$ is identical to \eqref{estimate-difference-lagrange}.

As a consequence, by \eqref{estimate-difference-lagrange} and Young's inequality, we obtain
\begin{align}\label{I-6}
I_6\leq C\Big[1+\sum_{k=1}^2(\|\Delta\phi_k\|_{L^2}^2+\|\Delta\phi_k^\ast\|_{L^2}^2)\Big]\sum_{k=1}^2\|\tilde{\phi}_k(t,\cdot)\|_{L^2}
+\frac14\sum_{k=1}^2\|\nabla\tilde{\phi}_k(t,\cdot)\|_{L^2}^2.
\end{align}

In conclusion, summing up \eqref{I-1}-\eqref{I-5} and \eqref{I-6}, we arrive at the inequality
\begin{align}\label{energy-inequality}
\frac{\ud}{\ud{t}} \sum_{k=1}^2\|\tilde{\phi}_k(t,\cdot)\|_{L^2} \leq C\Big[1+\sum_{k=1}^2(|\lambda_k(t)|^2+\|\Delta\phi_k(t,\cdot)\|_{L^2}^2+\|\Delta\phi_k^\ast(t,\cdot)\|_{L^2}^2)\Big]\sum_{k=1}^2\|\tilde{\phi}_k(t,\cdot)\|_{L^2}.
\end{align}
Note that $\lambda_k(t), \Delta\phi_k(t,\cdot), \Delta\phi_k^\ast(t,\cdot)\in L^2(0, T; L^2(\T))$ for $1\leq k\leq 2$, hence a direct application of Gronwall's inequality to \eqref{IC-difference-equ} yields
$
\tilde{\phi}_1(t,\cdot)=\tilde{\phi}_2(t,\cdot)\equiv 0,
$
which finishes the proof.
\end{proof}

\begin{remark}
Theorem \ref{theorem-main} is still valid if  $\T$ is replaced by any smooth and bounded domain in $\RR^3$, provided that homogeneous Dirichlet boundary conditions are imposed. Besides Theorem \ref{theorem-main} is also valid for two-dimensional case.
\end{remark}

\begin{remark}
While Theorem \ref{theorem-main} is in regard to the wellposedness of the ACON system in Lagrange multiplier form, a direct application of the De Giorgi's minimization movement scheme can also lead to the wellposedness of the ACON system in penalty form as follows:
\begin{align}
&\partial_t\phi_i=\Delta\phi_i+\frac{\Delta\phi_j}{2}-\frac12\frac{\partial{W}_{\mathrm{T}}}{\partial\phi_i}
-\sum_{k=1}^2\gamma_{ik}(-\Delta)^{-1}\big(f(\phi_k)-\omega_k\big)f'(\phi_i) - M \int_{\T} (f(\phi_i)-\omega_i)\text{d}x \cdot f'(\phi_i), \\
&\phi_i(x,0)=\phi_{i0}(x),\quad i=1,2,
\end{align}
where $M\gg1$ is the penalty constant. Indeed, in the penalty form, one does not need to handle any singularity arising from nontrivial denominators, which makes the application of the De Giorgi's minimization movement scheme much more straightforward.
\end{remark}

\begin{remark}
The wellposedness of the Allen-Cahn-Ohta-Kawasaki (ACOK) equation \cite{XuZhao_submitted2020}, the binary counterpart of the ACON system, either in the Lagrange multiplier form or penalty form, can be similarly established by following the De Giorgi's minimization movement scheme.
\end{remark}

%%%%%%%%%%%%%%%%%%%%%%%%%%%%%%%%%%%%%%%%%%%%%%%%%%%%%%%%%%%%%%%%%%%%%%%%%%%%%%%%%%%%%%%%%%%%%%%%%%%%%%%%%%%%%%%%%%%%%%%%%%%%%%%%%%%%%%%%%%%%%%%%%%%%%%%%%%%%%%%%%%%%%%%%%%%%%%%%%%%%%%%%%%%%%%%%%%%%%%%%%%%%%%%%%%%%%%%%%%%%%%%%%%%%%%%%%%%%%%%%%%%%%%%%%%%%%%%%%%%%%%%%%%%%%%%%%%%%%%%%%%%%%%%%%%%%%%%%%%%%%%%%%%%%%%%%%%%%%%%%%%%%%%%%%%%%%%%%%%%%%%%%%%%%%%%%%%%%%%%%%%%%%%%%%%%%%%%%%%%%%%%%%%%%%%%%%%%%%%%%%%%%%%%%%%%%%%%%%%%%%%%%%%%%%%%%%%%%%%%%%%%%%%%%%%%%%%%%%%%%%%%%%%%%%%%%%%%%%%%%%%%%%%%%%%%%%%%%%%%%%%%%%%%%%%%%%%%%%%%%%%%%%%%%%%%%%%%%%%%%%%%%%%%%%%%%%%%%%%%%%%%%%%%%%%%%%%%%%%%%%

%%%%%%%%%%%%%%%%%%%%%%%%%%%%%%%%%%%%%%%%%%%%%%%%%%%%%%%%%%%%%%%%%%%%%%%%%%%%%%%%%%%%%%%%%%%%%%%%%%%%%%%%%%%%%%%%%%%%%%%%%%%%%%%%%%%%%%%%%%%%%%%%%%%%%

\section{Concluding remarks}

In this paper, we prove the global well-posedness of the ACON system with two fixed nonlinear volume constraints. Different from the standard De Giorgi's minimizing movement scheme, we identify the limit curve first and use an approximation of this limit curve to establish the nonlinear terms caused by the nonlinear volume constraint in the discrete Euler Lagrange equation. This special treatment can be potentially use to study the well-posedness of other $L^2$ gradient flow dynamics with nonlinear constraints. 

%%%%%%%%%%%%%%%%%%%%%%%%%%%%%%%%%%%%%%%%%%%%%%%%%%%%%%%%%%%%%%%%%%%%%%%%%%%%%%%%%%%%%%%%%%%%%%%%%%%%%%%%%%%%%%%%%%%%%%%%%%%%%%%%%%%%%%%%%%%%%%%%%%%%%%%%%%%%%%%%%%%%%%%%%%%%%%%%%%%%%%%%%%%%%%%%%%%%%%%%%%%%%%%%%%%%%%%%%%%%%%%%%%%%%%%%%%%%%%%%%%%%%%%%%%%%%%%%%%%%%%%%%%%%%%%%%%%%%%%%%%%%%%%%%%%%%%%%%%%%%%%%%%%%%%%%%%%%%%%%%%%%%%%%%%%%%%%%%%%%%%%%%%%%%%%%%%%%%%%%%%%%%%%%%%%%%%%%%%%%%%%%%%%%%%%%%%%%%%%%%%%%%%%%%%%%%%%%%%%%%%%%%%%%%%%%%%%%%%%%%%%%%%%%%%%%%%%%%%%%%%%%%%%%%%%%%%%%%%%%%%%%%%%%%%%%%%%%%%%%%%%%%%%%%%%%%%%%%%%%%%%%%%%%%%%%%%%%%%%%%%%%%%%%%%%%%%%%%%%%%%%%%%%%%%%%%%%%%%%%%

\section{Appendix}

In the appendix, we shall derive $\forall\,n\geq N$ the
corresponding Euler-Lagrange equation for the minimizer
$\big(\phi_{1\tau_n}^{k+1}, \phi_{2\tau_n}^{k+1}\big)$ to the functional
\begin{align}
F_{\tau}[\phi_1, \phi_2; \phi_{1\tau_n}^{k}, \phi_{2\tau_n}^{k}]= F_{\tau}[\phi_1, \phi_2] + \dfrac{\|\phi_1-\phi_{1\tau_n}^{k}\|_{L^2(\T)}^2+\|\phi_2-\phi_{2\tau_n}^{k}\|_{L^2(\T)}^2}{2\tau},
\end{align}
in the admissible set $H_{\omega_1}^1\times H_{\omega_2}^1$. This is an adapted version of \cite[Theorem 2, Section 8.4]{Evans_Book1998}, but for the sake of completeness we provide all details here. In the sequel the index $i$ ranges from $1$ to $2$.

\smallskip

\noindent\textbf{Step 1.} Let $v_1, v_2\in H^1(\T)$ be two independent functions. By
\eqref{nonzero-condition}, we know that
\[
f'(\phi_{i\tau_n}^{k+1})\;\mbox{is not equal to zero a.e. within } \T, \quad i=1,2.
\]
And by the choice of \eqref{test-function}, we have
\begin{equation}\label{j-1}
\int_{\T}f'(\phi_{i\tau_n}^{k+1}(x)) w_{i\tau_n}^{k+1}(x)\,\ud{x}\neq
0,\qquad \;\forall\ n\geq N, \  \forall\,0\leq  k\leq N_n-1.
\end{equation}
Let us consider the following two functions
\begin{equation}
j_i(\delta,\sigma):=\int_{\T}\Big[f\big(\phi_{i\tau_n}^{k+1}+\delta {v_i}+\sigma w_{i\tau_n}^{k+1}\big)-\omega_i\Big]\,\ud{x}.
\end{equation}
Then it is clear that
\begin{equation}
j_i(0,0)=\int_{\T}\big[f(\phi_{i\tau_n}^{k+1})-\omega_i\big]\,\ud{x}=0.
\end{equation}
Besides, $j$ is $C^1$ and satisfies
\begin{align}
\frac{\partial{j}_i}{\partial\delta}(\delta,\sigma) & =\int_{\T}f'\big(\phi_{i\tau_n}^{k+1}+\delta {v_i}+\sigma w_{i\tau_n}^{k+1}\big)v_i(x)\,\ud{x},\label{j-2}\\
\frac{\partial{j}_i}{\partial\sigma}(\delta,\sigma) & =\int_{\T}f'\big(\phi_{i\tau_n}^{k+1}+\delta {v_i}+\sigma w_{i\tau_n}^{k+1}\big) w_{i\tau_n}^{k+1}(x)\,\ud{x}.
\label{j-3}
\end{align}
Note that \eqref{j-1} implies
\[
\frac{\partial{j}_i}{\partial\sigma}(0, 0)\neq 0.
\]
As a consequence, using implicit function theorem, there exist $C^1$ functions $\eta_i:\RR\rightarrow\RR$ satisfying
\begin{align}
\eta_i(0)&=0,\\
j_i(\delta,\eta_i(\delta))&=0, \quad\mbox{for all sufficiently small }
|\delta|\leq\delta_0,\label{j-4}
\end{align}
for some $\delta_0>0$. Then we obtain after differentiating both sides of \eqref{j-4} that
\[
\frac{\partial{j}_i}{\partial\delta}(\delta,\eta_i(\delta))+\frac{\partial{j}_i}{\partial\sigma}(\delta,\eta_i(\delta))\eta'_i(\delta)=0,
\]
which together with \eqref{j-2} and \eqref{j-3} gives
\begin{align}\label{j-5}
\eta'_i(0)=-\dfrac{\int_{\T}f'\big(\phi_{i\tau_n}^{k+1})v_i(x)\,\ud{x}}{\int_{\T}f'\big(\phi_{i\tau_n}^{k+1}) w_{i\tau_n}^{k+1}(x)\,\ud{x}}
\end{align}

\noindent\textbf{Step 2.} Next let us define
\[
I(\delta):=F_{\tau}\big[\phi_{1\tau_n}^{k+1}+\delta {v_1}+\eta_1(\delta) w_{1\tau_n}^{k+1},\,\phi_{2\tau_n}^{k+1} + \delta {v_2}+\eta_2(\delta) w_{2\tau_n}^{k+1}; \phi_{1\tau_n}^{k}, \phi_{1\tau_n}^{k}\big].
\]
By \eqref{j-4}, $\phi_{i\tau_n}^{k+1}+\delta {v_i}+\eta_i(\delta) w_{i\tau_n}^{k+1} \in H_{\omega_i}^1$, $\forall\,|\delta|\leq\delta_0$, $i=1,2$. Thus the $C^1$ function $I(\cdot)$ takes the minimum value at $0$, which yields $0=I'(0)$. Since $v_1$ and $v_2$ are independent, we get after expansion
\begin{align*}
0&=\int_{\T}\left[\dfrac{\phi_{1\tau_n}^{k+1}-\phi_{1\tau_n}^k}{\tau_n}+ \frac12 W'(\phi_{1\tau_n}^{k+1})- \frac12 W'(1-\phi_{1\tau_n}^{k+1}-\phi_{2\tau_n}^{k+1}) \right]\big[v_1(x)+\eta_1'(0) w_{1 n}^{k+1}(x)\big]\,\ud{x}\non\\
&\quad+\int_{\T}  \sum_{l=1}^2\gamma_{1l}(-\Delta)^{-1}\big(f(\phi_{l\tau_n}^{k+1})-\omega_l\big)
f'(\phi_{1\tau_n}^{k+1})\big[v_1(x)+\eta_1'(0) w_{1\tau_n}^k(x)\big]\,\ud{x} \non\\
&\quad+\int_{\T}\Big(\nabla\phi_{1\tau_n}^{k+1}+\frac{1}{2}\nabla\phi_{2\tau_n}^{k+1}\Big)\big[v_1(x)+\eta_1'(0) w_{1\tau_n}^{k+1}(x)\big]\,\ud{x},
\end{align*}
and
\begin{align*}
0&=\int_{\T}\left[\dfrac{\phi_{2\tau_n}^{k+1}-\phi_{2\tau_n}^k}{\tau_n}+ \frac12 W'(\phi_{2\tau_n}^{k+1})- \frac12 W'(1-\phi_{1\tau_n}^{k+1}-\phi_{2\tau_n}^{k+1}) \right]\big[v_2(x)+\eta_2'(0) w_{2 n}^{k+1}(x)\big]\,\ud{x}\non\\
&\quad+\int_{\T}  \sum_{l=1}^2\gamma_{2l}(-\Delta)^{-1}\big(f(\phi_{l\tau_n}^{k+1})-\omega_l\big)
f'(\phi_{2\tau_n}^{k+1})\big[v_2(x)+\eta_2'(0) w_{2\tau_n}^k(x)\big]\,\ud{x} \non\\
&\quad+\int_{\T}\Big(\nabla\phi_{2\tau_n}^{k+1}+\frac{1}{2}\nabla\phi_{1\tau_n}^{k+1}\Big)\big[v_2(x)+\eta_2'(0) w_{2\tau_n}^{k+1}(x)\big]\,\ud{x}.
\end{align*}
Define $\lambda_{i\tau_n}^{k+1}$ as in (\ref{lagrange-multiplier}), then the above two equations lead to the Euler-Lagrange equations (\ref{eqn:EulerLagrange}).

%%%%%%%%%%%%%%%%%%%%%%%%%%%%%%%%%%%%%%%%%%%%%%%%%%%%%%%%%%%%%%%%%%%%%%%%%%%%%%%%%%%%%%%%%%%%%%%%%%%%%%%%%%%%%%%%%%%%%%%%%%%%%%%%%%%%%%%%%%%%%%%%%%%%%%%%%%%%%%%%%%%%%%%%%%%%%%%%%%%%%%%%%%%%%%%%%%%%%%%%%%%%%%%%%%%%%%%%%%%%%%%%%%%%%%%%%%%%%%%%%%%%%%%%%%%%%%%%%%%%%%%%%%%%%%%%%%%%%%%%%%%%%%%%%%%%%%%%%%%%%%%%%%%%%%%%%%%%%%%%%%%%%%%%%%%%%%%%%%%%%%%%%%%%%%%%%%%%%%%%%%

\section{Acknowledgements}

S. Joo would like to acknowledge support from the National Science Foundation through grant \#DMS-1909268 and Simons Foundation Grant No. 422622.
X. Xu's work is supported by a grant from the Simons Foundation through grant No. 635288. Y. Zhao's work is supported by a grant from the Simons Foundation through Grant No. 357963 and the Columbian College Facilitating Funds (CCFF) of George Washington University.

%%%%%%%%%%%%%%%%%%%%%%%%%%%%%%%%%%%%%%%%%%%%%%%%%%%%%%%%%%%%%%%%%%%%%%%%%%%%%%%%%

%\begin{thebibliography}{99}
%\itemsep=0pt
%
%
%
%\bibitem{AGS08}
%L. Ambrosio, N. Gigli and G. Savar{\'e}, \emph{Gradient flows in metric
%spaces and in the space of probability measures}, Lectures in
%Mathematics ETH Z\"urich, Birkh\"auser Verlag, Basel, second
%edition, 2008.
%
%\bibitem{B11}
%H. Brezis, \emph{Functional analysis, Sobolev spaces and partial
%differential equations}, Universitext, Springer, New York, 2011.
%
%\bibitem{EV98} L.~C. Evans, \emph{Partial differential equations},
%Graduate Studies in Mathematics, \textbf{19}, American Mathematical
%Society, Providence, RI, 1998.
%
%\bibitem{T01} R. Temam, Navier-Stokes Equations: Theory and Numerical Analysis, AMS, 2001.
%
%\bibitem{Z04}
%S.~M. Zheng,  \emph{Nonlinear evolution equations}, Pitman series
%Monographs and Survey in Pure and Applied Mathematics, \textbf{133},
%Chapman \& Hall/CRC, 2004.
%
%\end{thebibliography}

\bibliography{OhtaKawasaki}

\end{document}